\documentclass[10pt]{amsart} 
\usepackage{amsmath,amssymb,amscd,amsthm,amsfonts,amstext,amsbsy,mathrsfs,hyperref,upgreek,mathtools,stmaryrd,enumitem,bbm} 

\usepackage[textsize=footnotesize,color=blue!40, bordercolor=white]{todonotes} 

\usepackage%[scale=0.682]
[a4paper, margin=1.2in]{geometry} 

% other fonts
\newcommand\V{\mathsf{V}}

\renewcommand{\L}{\mathcal{L}}
\newcommand{\C}{\mathcal{C}}

\newcommand{\PP}{\mathbb{P}}
\newcommand{\QQ}{\mathbb{Q}}

\newcommand{\MM}{\mathbb{M}}

\newcommand{\anf}[1]{{\text{``}\hspace{0.3ex}{#1}\hspace{0.01ex}\text{''}}}
\newcommand{\op}{\mathsf{op}}

% operators

\newcommand{\range}{\operatorname{range}}
\newcommand{\rank}{\operatorname{rank}}
\newcommand{\Col}{\operatorname{Col}}
\newcommand{\ra}{\rightarrow}

\newcommand{\Llr}{\Longleftrightarrow}
\newcommand{\tc}{\operatorname{tc}}

\newcommand{\one}{\mathbbm1}

% spaces
\newcommand{\On}{{\mathrm{Ord}}}

\newcommand{\setel}{{\prec\!\!\!\circ}}

% axioms

\newcommand{\ZF}{{\sf ZF}}

\newcommand{\GB}{{\sf GB}}
\newcommand{\GBC}{{\sf GBC}}

% others

%Luecke

\newcommand{\Set}[2]{\{{#1}~\vert~{#2}\}}

\newcommand{\dom}[1]{{{\rm{dom}}(#1)}}

% theorems
\newtheorem{theorem}{Theorem}[section]
\newtheorem{lemma}[theorem]{Lemma}
\newtheorem{corollary}[theorem]{Corollary}

\newtheorem{question}[theorem]{Question}

\newtheorem{claim}{Claim}
\newtheorem*{claim*}{Claim}
\newtheorem*{subclaim*}{Subclaim}

\theoremstyle{definition}
\newtheorem{definition}[theorem]{Definition}
\newtheorem*{notation}{Notation}

\newtheorem{example}[theorem]{Example}
\theoremstyle{remark}
\newtheorem{remark}[theorem]{Remark}

% enumeration

\newenvironment{enumerate-(a)}{\begin{enumerate}[label={\upshape (\alph*)}, leftmargin=2pc]}{\end{enumerate}}

\newenvironment{enumerate-(a)-r}{\begin{enumerate}[label={\upshape (\alph*)}, leftmargin=2pc,resume]}{\end{enumerate}}

\newenvironment{enumerate-(A)}{\begin{enumerate}[label={\upshape (\Alph*)}, leftmargin=2pc]}{\end{enumerate}}

\newenvironment{enumerate-(A)-r}{\begin{enumerate}[label={\upshape (\Alph*)}, leftmargin=2pc,resume]}{\end{enumerate}}

\newenvironment{enumerate-(i)}{\begin{enumerate}[label={\upshape (\roman*)}, leftmargin=2pc]}{\end{enumerate}}

\newenvironment{enumerate-(i)-r}{\begin{enumerate}[label={\upshape (\roman*)}, leftmargin=2pc,resume]}{\end{enumerate}}

\newenvironment{enumerate-(I)}{\begin{enumerate}[label={\upshape (\Roman*)}, leftmargin=2pc]}{\end{enumerate}}

\newenvironment{enumerate-(I)-r}{\begin{enumerate}[label={\upshape (\Roman*)}, leftmargin=2pc,resume]}{\end{enumerate}}

\newenvironment{enumerate-(1)}{\begin{enumerate}[label={\upshape (\arabic*)}, leftmargin=2pc]}{\end{enumerate}}

\newenvironment{enumerate-(1)-r}{\begin{enumerate}[label={\upshape (\arabic*)}, leftmargin=2pc,resume]}{\end{enumerate}}

\begin{document}

\thanks{The first and third author were supported by DFG-grant LU2020/1-1.}

\subjclass[2010]{03E40, 03E70} 

\keywords{Class forcing, Forcing theorem, Proper classes}

\author{Peter Holy}
\address{Peter Holy, Math. Institut, Universit\"at Bonn,
Endenicher Allee 60, 53115 Bonn, Germany}
\email{pholy@math.uni-bonn.de}
\urladdr{}

\author{Regula Krapf}
\address{Regula Krapf, Math. Institut, Universit\"at Koblenz-Landau,
Universit\"atsstra\ss e 1, 56070 Koblenz, Germany}
\email{krapf@uni-koblenz.de}
\urladdr{}

\author{Philipp Schlicht}
\address{Philipp Schlicht, Math. Institut, Universit\"at Bonn, 
Endenicher Allee 60, 53115 Bonn, Germany}
\email{schlicht@math.uni-bonn.de}
\urladdr{}

%\title{}
\date{\today}

\title{Sufficient conditions for the forcing theorem, and turning proper classes into sets}

\begin{abstract} 
We present three natural combinatorial properties for class forcing notions, which imply the forcing theorem to hold. We then show that all known sufficent conditions for the forcing theorem (except for the forcing theorem itself), including the three properties presented in this paper, imply yet another regularity property for class forcing notions, namely that proper classes of the ground model cannot become sets in a generic extension, that is they do not have set-sized names in the ground model. We then show that over certain models of G\"odel-Bernays set theory without the power set axiom, there is a notion of class forcing which turns a proper class into a set, however does not satisfy the forcing theorem. Moreover, we show that the property of not turning proper classes into sets can be used to characterize pretameness over such models of G\"odel-Bernays set theory.
\end{abstract} 

\maketitle

%\setcounter{tocdepth}{1}
%\tableofcontents 

\section{Introduction}

While the forcing theorem is a provable property of set forcing notions, this is not the case for notions of class forcing (see \cite{ClassForcing}). In this paper, we continue the work from \cite{ClassForcing} and from \cite{pretameness} by isolating further natural sufficient properties of class forcing notions that imply the forcing theorem to hold. While one of them (\emph{approachability by projections}) is only a minor generalization of the principle of the same name from \cite[Section 6]{ClassForcing}, and has a somewhat lengthy definition, the other two properties turn out to be equivalent to simple forcing properties. That is, we will show the forcing theorem to be a consequence of either not adding new sets (the \emph{set decision property}), or of every new set being added by a set-sized complete subforcing (the \emph{set reduction property}).

We then show that all of the known sufficent conditions for the forcing theorem (except for the forcing theorem itself), including the ones that we will introduce in this paper, also imply that over models of G\"odel-Bernays set theory without the power set axiom, proper classes of the ground model will not be turned into sets in a generic extension; for a notion of class forcing $\PP$, we say that \emph{a proper class $X$ of the ground model becomes a set in a $\PP$-generic extension} if there is a (set-sized) $\PP$-name $\sigma$ and a $\PP$-generic filter $G$ such that $\sigma^G=X$. Perhaps somewhat surprisingly, we then show that it is possible that a proper class \emph{can} be turned into a set, by a notion of class forcing which does not satisfy the forcing theorem. In fact, this property can even be used to characterize pretameness over certain models of G\"odel-Bernays set theory. This latter characterization continues a series of results in \cite[Theorem 1.12]{pretameness}.

We will start the paper by introducing some basic definitions and notation in the next section. This will essentially be the same basic setup as in \cite{ClassForcing} or in \cite{pretameness}.

\section{Basic Definitions and Notation}

We will work with transitive second-order models of set theory, that is models of the form $\MM=\langle M,\C\rangle$, where $M$ is transitive and denotes the collection of \emph{sets} of $\MM$, and $\C$ denotes the collection of \emph{classes} of $\MM$.\footnote{Arguing in the ambient universe $\V$, we will sometimes refer to classes of such a model $\MM$ as sets, without meaning to indicate that they are sets of $\MM$. In particular this will be the case when we talk about subsets of $M$.} We require that $M\subseteq\C$, and that elements of $\C$ are subsets of $M$. We call elements of $\C\setminus M$ \emph{proper classes} (of $\MM$).  Classical transitive first-order models of set theory are covered by this approach, letting $\C$ be the collection of classes definable over $\langle M,\in\rangle$. The theories that we will be working in will be fragments of \emph{G\"odel-Bernays set theory} $\GB$.

\begin{notation}
 \begin{enumerate-(1)}
  \item $\GB^-$ denotes the theory in the two-sorted language with variables for sets and classes, 
with the set axioms provided by the axioms of $\ZF^-$ with class parameters allowed in the schemata of Separation and Collection, and the class axioms of extensionality, foundation and first-order class comprehension (i.e.\ involving only set quantifiers).
$\GB^-$ enhanced with the power set axiom is the common collection of axioms of $\GB$. $\GBC^-$ is $\GB^-$ together with the axiom postulating the existence of a set-like well-order, i.e. a global well-order whose initial segments are set-sized. % Note that by \cite[Remark 1.2]{pretameness}, this is equivalent to the existence of a global well-order of order-type $\On^M$.
  \item By a \emph{countable} transitive model %of $\GB^-$, $\GB$ or $\GBC^-$
, we mean a transitive second-order model $\MM=\langle M,\C\rangle$ % of $\GB^-$, $\GB$ or $\GBC^-$ respectively,
such that both $M$ and $\C$ are countable in $\V$. 
%  \item Given a transitive second-order model of the form $\MM=\langle M,\C\rangle$, we let $\Def(\MM)$ denote the collection of subsets of $M$ that are first-order definable over $M$ using class parameters from $\C$ as predicates. Note that the axiom of first-order class comprehension implies that if $\MM=\langle M,\C\rangle\models\GB^-$, then $\C$ is closed under first-order definability (over $\MM$), that is $\Def(\MM)=\C$.
 \end{enumerate-(1)}
\end{notation}

Fix a countable transitive model $\MM=\langle M,\C\rangle$ of $\GB^-$. By a \emph{notion of class forcing} (for $\MM$) we mean a partial order $\PP=\langle P,\leq_\PP\rangle$ such that $P,\leq_\PP\,\in\C$. 
We will frequently identify $\PP$ with its domain $P$. In the following, we also fix a notion of class forcing $\PP=\langle P,\leq_\PP\rangle$ for $\MM$.  

We call $\sigma$ a \emph{$\PP$-name} if all elements of $\sigma$ are of the form $\langle\tau,p\rangle$, where 
$\tau$ is a $\PP$-name and $p\in\PP$. 
We define $M^\PP$ to be the set of all $\PP$-names that are elements of $ M$ and define $\C^\PP$ to be the set of all $\PP$-names that are elements of $\C$.
In the following, we will usually call the elements of $M^\PP$ simply \emph{$\PP$-names} and we will call the elements of $\C^\PP$ \emph{class $\PP$-names}.
%\todo[inline]{Should we say here what a $\PP$-name is? Or maybe already in the introduction. Of course it doesn't really matter, but then we talk about the definability of the class of $\PP$-names...}
If $\sigma\in M^\PP$ is a $\PP$-name, we define 
$$\rank\sigma=\sup\{\rank\tau+1\mid\exists p\in\PP\,[\langle\tau,p\rangle\in\sigma]\}$$ 
to be its \emph{name rank}. 
%We will sometimes also need to use the usual set theoretic rank of some $\sigma\in M$, which we will denote as $\rnk(\sigma)$.

We say that a filter $G$ on $\PP$ is \emph{$\PP$-generic over $\MM$}, if $G$ meets every dense subset of $\PP$ that is an element of $\C$. 
Given such a filter $G$ and a $\PP$-name $\sigma$, we recursively define the \emph{$G$-evaluation} of $\sigma$ as
$$\sigma^G=\{\tau^G\mid\exists p\in G\,[\langle\tau,p\rangle\in\sigma]\},$$
and similarly we define $\Gamma^G$ for $\Gamma\in\C^\PP$. Moreover, if $G$ is $\PP$-generic over $\MM$, then we set
$M[G]=\Set{\sigma^G}{\sigma\in M^\PP}$ and $\C[G]=\Set{\Gamma^G}{\Gamma\in\C^\PP}$. 

Given an $\L_\in$-formula $\varphi(v_0,\dots,v_{m-1},\vec\Gamma)$, where $\vec\Gamma\in(\C^\PP)^n$ are class name parameters, $p\in\PP$ and $\vec\sigma\in(M^\PP)^m$, we write
$p\Vdash_\PP^\MM\varphi(\vec\sigma,\vec\Gamma)$, if for every $\PP$-generic filter $G$ over $\MM$ with $p\in G$, $\langle M[G],\Gamma_0^G,\dots,\Gamma_{n-1}^G\rangle\models\varphi(\sigma_0^G,\dots,\sigma_{m-1}^G,\Gamma_0^G,\dots,\Gamma_{n-1}^G)$. 

% \begin{definition}\label{def:ft}
%  Let $\varphi\equiv\varphi(v_0,\ldots,v_{m-1})$ be an $\L^n$-formula. 
%  \begin{enumerate-(1)}
%   \item We say that \emph{$\PP$ satisfies the definability lemma for $\varphi$ over $\MM$} if 
%    \begin{equation*}
%   \Set{\langle p,\sigma_0,\ldots,\sigma_{m-1}\rangle\in P\times M^\PP\times\ldots\times M^\PP}{p\Vdash^{\MM,\vec{\Gamma}}_\PP\varphi(\sigma_0,\ldots,\sigma_{m-1})}\in\C 
%  \end{equation*} 
%  for all $\vec{\Gamma}\in(\C^\PP)^n$. 
%  
%  \item We say that \emph{$\PP$ satisfies the truth lemma for $\varphi$ over $\MM$} if for all $\sigma_0,\ldots,\sigma_{m-1}\in M^\PP$, $\vec{\Gamma}\in(\C^\PP)^n$ and every filter $G$ which is $\PP$-generic over $\MM$ with 
%   \begin{equation*}
%     M_{\vec{\Gamma}}[G]\models\varphi(\sigma_0^G,\ldots,\sigma_{m-1}^G), 
%   \end{equation*}
%   there is $p\in G$ with $p\Vdash^{\MM,\vec{\Gamma}}_\PP\varphi(\sigma_0,\ldots,\sigma_{m-1})$.
% 
%   \item We say that \emph{$\PP$ satisfies the forcing theorem for $\varphi$ over $\MM$} if $\PP$ satisfies both the definability lemma and the truth lemma for $\varphi$ over $\MM$.
%  \end{enumerate-(1)}
%  \end{definition}

A fundamental result in the context of set forcing is the \emph{forcing theorem}. It consists of two parts, the first one of which, the so-called \emph{definability lemma}, states that the forcing relation is definable in the ground model, and the second part, denoted as the \emph{truth lemma}, says that every formula which is true in a generic extension $M[G]$ is forced by some condition in the generic filter $G$. In the context of second-order models of set theory, this has the following natural generalization:

\begin{definition}\label{def:ft}
 Let $\varphi\equiv\varphi(v_0,\ldots,v_{m-1},\vec\Gamma)$ be an $\L_\in$-formula with class name parameters $\vec\Gamma\in(\C^\PP)^n$. 
 \begin{enumerate-(1)}
  \item We say that \emph{$\PP$ satisfies the definability lemma for $\varphi$ over $\MM$} if 
   \begin{equation*}
  \Set{\langle p,\sigma_0,\ldots,\sigma_{m-1}\rangle\in P\times(M^\PP)^m}{p\Vdash^\MM_\PP\varphi(\sigma_0,\ldots,\sigma_{m-1},\vec\Gamma)}\in\C. 
 \end{equation*} 
 \item We say that \emph{$\PP$ satisfies the truth lemma for $\varphi$ over $\MM$} if for all $\sigma_0,\ldots,\sigma_{m-1}\in M^\PP$, and every filter $G$ which is $\PP$-generic over $\MM$ with 
  \begin{equation*}
    \langle M[G],\Gamma_0^G,\dots,\Gamma_{n-1}^G\rangle\models\varphi(\sigma_0^G,\ldots,\sigma_{m-1}^G,\Gamma_0^G,\dots,\Gamma_{n-1}^G), 
  \end{equation*}
  there is $p\in G$ with $p\Vdash^{\MM}_\PP\varphi(\sigma_0,\ldots,\sigma_{m-1},\vec\Gamma)$.
  \item We say that \emph{$\PP$ satisfies the forcing theorem for $\varphi$ over $\MM$} if $\PP$ satisfies both the definability lemma and the truth lemma for $\varphi$ over $\MM$.
 \end{enumerate-(1)}
 \end{definition}
 
Note that in class forcing, the forcing theorem may fail even for atomic formulae (\cite[Theorem 1.3]{ClassForcing}). 
A crucial result is that if the definability lemma holds for one atomic formula, then the forcing theorem holds for each $\L_\in$-formula with class name parameters (see \cite[Theorem 4.3]{ClassForcing}). %In Sections \ref{sdp}, \ref{srp} and \ref{approachability}, we will present three properties of class-sized forcing notions which imply the forcing theorem to hold.

%Another property which implies the forcing theorem to hold is the following:
\begin{definition}\cite[Chapter 2]{MR1780138}
A notion of (class) forcing $\PP$ for $\MM$ is \emph{pretame} for $\MM$ if for every $p\in\PP$ and for every sequence of dense subclasses $\langle D_i\mid i\in I\rangle\in\C$ of $\PP$ with $I\in M$, there is $q\leq_\PP p$ and $\langle d_i\mid i\in I\rangle\in M$ such that for every $i\in I$, $d_i\subseteq D_i$ and $d_i$ is predense below $q$.
\end{definition}
The observation that pretame notions of class forcing satisfy the forcing theorem over any model of $\GB^-$ was first made by Maurice Stanley in his PhD thesis (\cite{stanley_thesis}), see also \cite{stanley_classforcing}. Moreover, Stanley observed (\cite{stanley_thesis},\cite{stanley_classforcing}) that pretameness characterizes the preservation of $\GB^-$ over models of $\GB$. For a proof of both these results in our setting, consult \cite[Theorem 2.4 and Theorem 3.1]{pretameness}.

\medskip

In this paper, we will frequently make use of a particular collection of notions of class forcing:
For an ordinal $\gamma\in\On^M$ and a proper class $Y\in\C$, let $\Col(\gamma,Y)^M$ denote the forcing notion that adds a surjection from $\gamma$ to $Y$ with conditions of size less than the $M$-cardinality of $\gamma$ over $\MM$, that is the conditions of $\Col(\gamma,Y)^M$ are partial functions from $\gamma$ to $Y$ with domain of size less than the cardinality of $\gamma$ in $M$, ordered by reverse inclusion.
A variant of $\Col(\gamma,Y)^M$ is provided by the forcing notion $\Col_*(\gamma,Y)^M$, which consists of those conditions $p\in\Col(\gamma,Y)^M$ whose domain is an ordinal. In Sections \ref{sdp} and \ref{approachability} we show that for every $\gamma\in\On^M$ and for every $Y\in\C$, both $\Col(\gamma,Y)^M$ and $\Col_*(\gamma,Y)^M$ satisfy the forcing theorem over $\MM$. \footnote{Note that for $\gamma=\omega$ and $Y=\On^M$, this is verified in \cite[Proposition 2.25]{MR1780138}, and also follows from more general results in \cite[Lemma 2.2, Lemma 6.3 and Theorem 6.4]{ClassForcing}.}

%In Section \ref{sdp} we prove that for every $\gamma\in\On^M$ and for every class $Y\in\C$, $\Col_*(\gamma,Y)$ satisfies the forcing theorem. Note that for $\gamma=\omega$ and $Y=\On^M$ it follows from \cite[Lemma 2.2, Lemma 6.3 and Theorem 6.4]{ClassForcing} that $\Col(\gamma,Y)$ and $\Col_*(\gamma,Y)$ satisfy the forcing theorem.
%
%\todo[inline]{Generalize this to $\Col_*(\gamma,\On)$}
%Furthermore, let $\Col_*(\omega,\On)^M$ denote the notion of forcing with conditions of the form $p\colon n\to\On^M$ for $n\in\omega$, ordered by reverse inclusion. Any generic for this forcing clearly gives rise to a cofinal sequence from $\omega$ to $\On^M$, this forcing does not add any new sets (i.e.\ $M[G]=M$ whenever $G$ is $\Col_*(\omega,\On)^M$-generic over $M$) and it satisfies the forcing theorem. The latter (easy) facts were verified in \cite[Proposition 2.25]{MR1780138} and follow from more general results in \cite[Lemma 2.2, Lemma 6.3 and Theorem 6.4]{ClassForcing}. They also follow from the results of Section \ref{sdp} of the present paper.

\section{The Set Decision Property}\label{sdp}

In this section, we introduce a simple combinatorial property which implies the forcing theorem. Moreover, we will show that this property exactly characterizes those notions of class forcing which do not add any new sets.

\begin{definition}
Let $\MM=\langle M,\C\rangle$ be a countable transitive model of $\GB^-$ and let $\PP$ be a notion of class forcing for $\MM$. Let $\dot G$ denote the canonical $\PP$-name for the generic filter.
\begin{enumerate-(1)-r}
 \item If $p\in\PP$ and $\sigma$ is a $\PP$-name, then we define the \emph{$p$-evaluation} of $\sigma$ by 
$$\sigma^p=\{\tau^p\mid\exists q\in\PP\,[\langle\tau,q\rangle\in\sigma\wedge\forall r\leq_\PP p\,( r\parallel_\PP q)]\}.$$
 \item Given conditions $p$ and $q$ in $\PP$, we write $p\leq_\PP^* q$ iff $\forall r\leq_\PP p\,(r\parallel_\PP q)$ (equivalently, $p\Vdash_\PP q\in\dot G$). Note that if $\PP$ is separative, then $p\leq_\PP^* q$ if and only if $p\leq_\PP q$.
 \item If $A\subseteq\PP$ is a set of conditions and $p\in\PP$, we write $p\bot_\PP A$ or $p\leq_\PP^*A$ if $\forall a\in A\,(p\bot_\PP a)$ or $\forall a\in A\,(p\leq_\PP^*a)$ respectively.
 \item If $A\subseteq\PP$ is a set of conditions and $p\in\PP$, then $p$ \emph{decides} $A$ (we write $p\sim_\PP A$) if for every $a\in A$, either $p\leq_\PP^*a$ or $p\bot_\PP a$.
 \item We say that $\PP$ has the \emph{set decision property} if
for every $p\in\PP$ and every set $A\subseteq\PP$ in $M$, there is an extension $q\leq_\PP p$ of $p$	 such that 
$q$ decides $A$.
\end{enumerate-(1)-r}
Note that if $p$ decides $A$, then $p$ decides for every condition in $A$ whether it lies in the generic filter or not, i.e.\ $p$ decides $\dot G\cap A$. 
%We abbreviate this by writing $q\parallel A$.
\end{definition}

A natural example of forcing notions with the set decision property are the (strategically) ${<}\On$-closed forcing notions,
i.e.\ notions of forcing which are ${<}\kappa$-(strategically) closed for every cardinal $\kappa$. We will leave the adaption of the example below to the case of strategically ${<}\On$-closed notions of forcing (and also the task of giving a precise definition of this property) to the reader, as it is straightforward and we will not make use of any such property in this paper.

\begin{example}
  Assuming that $\MM$ is a model of $\GBC^-$, then every ${<}\On$-closed notion of class forcing $\PP$ for $\MM$ has the set decision property: Let $p\in\PP$ and let $A\subseteq\PP$ be a set of conditions. Using choice, we can enumerate $A$ as $\{a_i\mid i<\kappa\}$ for some cardinal $\kappa$. 
Inductively, we define a sequence $\langle p_i\mid i<\kappa\rangle$ of conditions such that 
for every $i<j<\kappa$, $p_j\leq_\PP a_i$, or $p_j\bot_\PP a_i$. 
\begin{itemize}
 \item Let $p_0=p$.
 \item Assume that $p_i$ has already been defined. If $p_i\parallel_\PP a_i$, then pick $p_{i+1}$ that is stronger than both $p_i$ and $a_i$, using the existence of a global well-order.
Otherwise, put $p_{i+1}=p_i$.
 \item For a limit ordinal $\alpha$, we use ${<}\On$-closure of $\PP$ and the global well-order to pick $p_\alpha$ stronger than $p_i$ for all $i<\alpha$. 
\end{itemize}
Now let $q\in\PP$ be a condition stronger than every $p_i$ for $i<\kappa$. By construction, $q$ decides $A$. 
\end{example}

\begin{example}Let $\gamma\in\On^M$ and $Y\in\C$ a proper class. Then the forcing notion $\PP=\Col_*(\gamma,Y)^M$ satisfies the set decision property: Suppose that $A\in M$ is a subset of $\PP$ and $p\in\PP$. Let $\beta=\dom{p}\in\On^M$. Now note that by assumption, $X=\bigcup_{q\in A}\range(q)\in M$ and since $Y$ is a proper class, there is $y\in Y\setminus X$.
 Then $q=p\cup\{\langle \beta,y\rangle\}$ decides $A$.
\end{example}

\begin{definition}
  Given a notion of class forcing $\PP$ and a $\PP$-name $\sigma$, we define the conditions appearing in (the transitive closure of) $\sigma$ by induction on name rank as
\[\tc(\sigma)=\bigcup\{\{p\}\cup\tc(\tau)\mid\langle\tau,p\rangle\in\sigma\}.\]
\end{definition}

\begin{lemma}\label{lem:ft}
Every class forcing $\PP$ for $\MM$ with the set decision property satisfies the forcing theorem and does not add new sets, that is $M[G]=M$ whenever $G$ is $\PP$-generic over $\MM$. 
\end{lemma}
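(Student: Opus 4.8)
The plan is to prove the two assertions simultaneously by induction on name rank, extracting from the set decision property the key fact that every $\PP$-name $\sigma$ is ``decided'' by a dense set of conditions. First I would observe that if $G$ is $\PP$-generic over $\MM$ and $\sigma\in M^\PP$ is a $\PP$-name, then $\tc(\sigma)$ is a set in $M$, so the set decision property applies to it. I claim that for a dense class of conditions $p$ — namely those deciding $\tc(\sigma)$ — the $p$-evaluation $\sigma^p$ equals $\sigma^G$ for every generic $G$ with $p\in G$, and moreover $\sigma^p\in M$. The point is that once $p$ decides $\tc(\sigma)$, for each pair $\langle\tau,q\rangle\in\sigma$ we have either $p\leq_\PP^* q$ (so $q\in G$ for every generic $G\ni p$, hence $\tau^G$ contributes to $\sigma^G$) or $p\bot_\PP q$ (so $q\notin G$, hence $\langle\tau,q\rangle$ never contributes); by induction $\tau^p=\tau^G$ in the first case, so $\sigma^p=\sigma^G$ regardless of which generic $G$ containing $p$ we pick. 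Since $p$ deciding $\tc(\sigma)$ also decides $\tc(\tau)$ for each such $\tau$, the recursion giving $\sigma^p$ is carried out internally in $M$, so $\sigma^p\in M$.

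From this the ``no new sets'' claim is immediate: given $\sigma\in M^\PP$ and generic $G$, by density pick $p\in G$ deciding $\tc(\sigma)$; then $\sigma^G=\sigma^p\in M$, so $M[G]\subseteq M$, and the reverse inclusion is standard (check names $\check x$ evaluate to $x$).

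For the forcing theorem, by \cite[Theorem 4.3]{ClassForcing} it suffices to verify the definability lemma for a single atomic formula, say $\sigma_0\in\sigma_1$ (the case $\sigma_0=\sigma_1$ being analogous), as the truth lemma then follows from the general theory; alternatively one can directly establish both the definability and truth lemmas for atomic formulae using the analysis above. Concretely, I would show
\[
p\Vdash_\PP^\MM \sigma_0\in\sigma_1 \iff \{q\leq_\PP p\mid q \text{ decides } \tc(\sigma_0)\cup\tc(\sigma_1), \ \sigma_0^q\in\sigma_1^q\} \text{ is dense below } p,
\]
and similarly for the negation, using that $\sigma_i^q$ depends only on $q$ (not on the ambient generic) once $q$ decides the relevant transitive closures. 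The right-hand side is a condition on $p$ expressible with a set quantifier over $\tc(\sigma_0)\cup\tc(\sigma_1)\cup\{q'\leq q\}$ together with the class relations $\leq_\PP$, $\leq_\PP^*$ and $\bot_\PP$, all of which lie in $\C$ by first-order class comprehension; hence the forcing relation for atomic formulae is a class. The truth lemma for atomic formulae falls out of the same equivalence: if $\sigma_0^G\in\sigma_1^G$, pick $p\in G$ deciding both transitive closures, and then $\sigma_0^p=\sigma_0^G\in\sigma_1^G=\sigma_1^p$ witnesses that $p$ forces $\sigma_0\in\sigma_1$.

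The main obstacle is the bookkeeping in the inductive claim that $\sigma^p=\sigma^G$ and $\sigma^p\in M$ for $p$ deciding $\tc(\sigma)$: one must be careful that deciding $\tc(\sigma)$ genuinely suffices to decide every $\tc(\tau)$ for $\langle\tau,q\rangle\in\sigma$ (it does, since $\tc(\tau)\subseteq\tc(\sigma)$ by definition of $\tc$), and that the ``$p$-evaluation'' operation, a priori defined using the class relation $\parallel_\PP$, is actually absolute to $M$ when restricted to names whose conditions $p$ decides — which again reduces to $\tc(\sigma)$ being a set on which the relevant part of $\leq_\PP^*$ and $\bot_\PP$ is coded by a set. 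Once this absoluteness is pinned down, everything else is routine.
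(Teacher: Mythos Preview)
Your proposal is correct and follows essentially the same approach as the paper: the key observation in both is that if $q$ decides $\tc(\sigma)$ and $q\in G$, then $\sigma^q=\sigma^G\in M$, and both then invoke \cite[Theorem~4.3]{ClassForcing} to reduce the forcing theorem to definability of a single atomic forcing relation, which is characterized via $p$-evaluations. The only cosmetic differences are that the paper treats $\anf{v_0=v_1}$ rather than $\anf{v_0\in v_1}$, and phrases the characterization as $\forall q\leq_\PP p\,(q\sim_\PP A\to\sigma^q=\tau^q)$ rather than your dense-set formulation; under the set decision property these are equivalent and both are first-order with class parameters.
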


\begin{proof}
 By \cite[Theorem 4.3]{ClassForcing}, to verify the forcing theorem it is enough to check that the definability lemma holds for $\anf{v_0=v_1}$. 
Let $\sigma,\tau\in M^\PP$. %We show how to define $\{p\Vdash_\PP\sigma=\tau$.
Let $A=\tc(\sigma\cup\tau)$ and let $p\in\PP$. 
%By assumption there is $q\leq p$ such that for all $a\in A$, $q\leq a$ or $q\bot a$. 
Then by the set decision property $p\Vdash_\PP\sigma=\tau$ if and only if $\forall q\leq_\PP p\,(q\sim_\PP A \ra q\Vdash_\PP\sigma=\tau)$. 
But if $q\sim_\PP A$ and $q\in G$ then $\sigma^q=\sigma^G$ (this in particular implies that $\sigma^G\in M$ and hence that $\PP$ does not add new sets), thus
%If $\sigma^q=\{\mu^q\mid\exists r(\langle\mu,r\rangle\in\sigma\wedge q\leq r)\}$ 
%(REPLACE WITH PREVIOUSLY DEFINED PARTIAL EVALUATION), then ... ($\sigma^G$ can be calculated from $\sigma^q$ in $M$!). 
we obtain $q\Vdash_\PP\sigma=\tau$ iff $\sigma^q=\tau^q$. Consequently, $p\Vdash_\PP\sigma=\tau$ can be defined by
$\forall q\leq_\PP p\,(q\sim_\PP A\ra\sigma^q=\tau^q)$. 
%(I DON'T THINK THIS IS TRUE, but what we have to do is calculate $\sigma^G$ and $\tau^G$, which we can by the above).
%Similarly, one defines $p\Vdash_\PP\sigma\in\tau$ iff $\forall q\leq p(q\parallel A\ra\sigma^q\in\tau^q)$.
%and for class names $\Gamma$, $p\Vdash_\PP\sigma\in\Gamma$ iff $p\Vdash_\PP\sigma\in\{\langle\mu,p\rangle\in\Gamma\mid\rank\mu\leq\rank\sigma\}$.
%Next, we have to prove that the truth lemma holds. Since all cases are similar, we 
%just prove it for equations. Assume that $G$ is generic and that $\sigma^G=\tau^G$. 
%Since by assumption, \[D=\{p\in\PP\mid\forall q\in\PP\cap\tc(\sigma\cup\tau)\ (p\leq q\vee p\bot q)\}\] is dense, there is 
%$p\in G\cap D$. Thus since $\sigma^p=\sigma^G=\tau^G=\tau^p$, we have $p\Vdash_\PP\sigma=\tau$. 
\end{proof}

% \begin{remark}
%  Property \eqref{al:str or inc} is equivalent to the following condition: $\PP$ adds no sets and the forcing 
% theorem holds for all formulae of the form $\sigma=\check{x}$. 
% \end{remark}
% 
% \begin{proof}
% The first direction is clear. For the converse suppose that $\PP$ adds no sets and that the forcing theorem 
% holds for all formulae of the form $\sigma=\check{x}$. Now let $p\in\PP$ and $A\subseteq\PP$ a set. 
% Now put 
% $$\sigma=\{\langle\check{a},a\rangle\mid a\in A\}.$$
% Clearly, $1_\PP\Vdash_\PP\sigma\subseteq\check{A}$. Moreover, we claim that the class
% $$D=\{q\in\PP\mid\exists x(q\Vdash_\PP\sigma=\check{x})\}$$
% is dense. For this let $q\in\PP$. Now if $G$ is generic such that $q\in G$, then there must be some 
% $x\in M$ such that $M[G]\models\sigma^G=x$. But by the truth lemma for $\sigma=\check{x}$ there has to 
% be $r\in G$ such that $r\Vdash_\PP\sigma=\check{x}$. Since $q$ and $r$ are compatible, there is $s\leq q,r$ 
% and $s\in D$. Now by density of $D$ we can take $q\leq p$ and $x\in M$ such that 
% $q\Vdash_\PP\sigma=\check{x}$. Clearly, $x\subseteq A$ and for all $a\in A$, if $a\in x$ then $q\leq a$ and 
% if $a\notin x$ then $q\bot a$. 

\begin{lemma}\label{lem:no sets->sdp} 
 Let $\PP$ be a notion of class forcing for $\MM$ which adds no new sets. Then $\PP$ has 
the set decision property. 
\end{lemma}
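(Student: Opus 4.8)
The goal is to prove the converse of Lemma~\ref{lem:ft}: if $\PP$ adds no new sets (i.e.\ $M[G]=M$ for every $\PP$-generic $G$ over $\MM$), then $\PP$ has the set decision property. So fix $p\in\PP$ and a set $A\subseteq\PP$ with $A\in M$; I must produce $q\leq_\PP p$ deciding $A$, meaning that for each $a\in A$ either $q\leq_\PP^* a$ or $q\bot_\PP a$. The natural strategy is to work with a carefully chosen $\PP$-name whose generic evaluation, by hypothesis, must already lie in $M$, and then extract $q$ from a condition that forces this evaluation to equal some fixed ground-model set.

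\textbf{The key steps.} First I would form the $\PP$-name $\sigma = \{\langle\check a, a\rangle \mid a\in A\}$ (where $\check a$ is the canonical name for $a$), so that for any generic $G$, $\sigma^G = A\cap G$; note $\sigma\in M^\PP$ since $A\in M$. By the no-new-sets hypothesis, $\sigma^G\in M$ for every generic $G$. The point is that the set $A\cap G$ coded by $\sigma^G$ is exactly the information ``which conditions of $A$ are in the generic.'' Now I argue: there must be some $q\leq_\PP p$ that decides what $\sigma^G$ is, in the sense that $q$ forces $\sigma\in\check M$ and more precisely forces $\sigma = \check b$ for a fixed $b\in M$ with $b\subseteq A$. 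To get such a $q$, pass to a $\PP$-generic filter $G$ with $p\in G$ (these exist since $\MM$ is countable); then $\sigma^G = b$ for some $b\in M$, and since genericity gives a form of the truth lemma for the ground model — or, more carefully, since $b\subseteq A$ and $b$ is a ground-model set, I can find $q\in G$ with $q\leq_\PP p$ such that $q\Vdash_\PP \sigma\subseteq\check b$ and $q\Vdash_\PP\check b\subseteq\sigma$, i.e.\ $q\Vdash_\PP \sigma = \check b$. (Here one uses that $\check b\subseteq\sigma$ is forced by a condition because $b$ is a set and one can hit the relevant dense class; and $\sigma\subseteq\check b$ follows by taking $q$ strong enough to be incompatible with every $a\in A\setminus b$ — this is where the real combinatorics happens.) Finally I verify that this $q$ decides $A$: for $a\in b$ we have $q\Vdash a\in\dot G$, i.e.\ $q\leq_\PP^* a$; for $a\in A\setminus b$ we have $q\bot_\PP a$, since otherwise some $r\leq_\PP q$ extends $a$, giving a generic $H\ni r$ with $a\in\sigma^H$ but $\sigma^H = b$, a contradiction.

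\textbf{The main obstacle.} The delicate point is obtaining the single condition $q$ that simultaneously decides, for \emph{every} $a\in A$, whether $a$ lies in $\dot G$ — one cannot simply invoke the forcing theorem or the truth lemma, because we do not yet know $\PP$ satisfies them (indeed the no-new-sets hypothesis is what will eventually \emph{give} us the forcing theorem via Lemma~\ref{lem:ft}, so using it here would be circular). The workaround is that the statement $\sigma^G = b$ for a \emph{fixed} ground-model set $b$ is equivalent to a conjunction of statements of the form ``$\check a\in\sigma$'' (for $a\in b$) and ``$\check a\notin\sigma$'' (for $a\in A\setminus b$), and each of these is decided by a \emph{dense} subclass of $\PP$ below $p$ purely by the definition of the name $\sigma$ and of $\le_\PP^*$, $\bot_\PP$ — no forcing theorem needed. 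Genericity of $G$ then threads all of this through a single condition $q\in G$ below $p$: concretely, one shows the class $D = \{ r\leq_\PP p \mid r\sim_\PP A\}$ is dense below $p$ directly (given $r\leq_\PP p$, one builds an extension deciding $A$ by a bookkeeping argument that the no-new-sets property licenses, since $\sigma^G\in M$ forces $A\cap G$ to be ``small'' enough to be pinned down), and any $q\in G\cap D$ works. Packaging this density argument cleanly — so that it genuinely avoids circularity and genuinely uses $M[G]=M$ rather than the forcing relation — is the crux of the proof.
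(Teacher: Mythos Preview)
Your setup is right --- the name $\sigma=\{\langle\check a,a\rangle\mid a\in A\}$ is exactly the object to look at, and you correctly identify the obstacle: you cannot invoke the truth lemma to extract a condition in $G$ forcing $\sigma=\check b$. But your workaround does not actually close the gap. Observe that a condition $q$ forces $\sigma=\check b$ (in the semantic sense) precisely when $q\leq_\PP^* a$ for every $a\in b$ and $q\bot_\PP a$ for every $a\in A\setminus b$; that is, precisely when $q\sim_\PP A$ and $b=\{a\in A\mid q\leq_\PP^* a\}$. So ``find $q\in G$ forcing $\sigma=\check b$'' and ``find $q$ deciding $A$'' are the same task, and your argument is circular. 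Your fallback --- show that $D=\{r\leq_\PP p\mid r\sim_\PP A\}$ is dense below $p$ --- is likewise just a restatement of the goal (indeed a strengthening of it), and the parenthetical ``bookkeeping argument that the no-new-sets property licenses'' is exactly the missing content.

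The paper's proof supplies that missing content by a \emph{contrapositive diagonalization carried out in $\V$}. Assume no $q\leq_\PP p$ decides $A$. Using that $\MM$ is countable, enumerate in $\V$ the dense subclasses of $\PP$ as $\langle D_n\mid n<\omega\rangle$ and the ground-model subsets of $A$ as $\langle x_n\mid n<\omega\rangle$. Build a descending sequence $\langle q_n\rangle$ below $p$: at stage $n$, since $q_n$ does \emph{not} decide $A$, it cannot be that $q_n\leq_\PP^* x_n$ and $q_n\bot_\PP(A\setminus x_n)$ simultaneously, so there is $a_n\in A$ witnessing the failure; extend $q_n$ to disagree with $x_n$ at $a_n$ (either below $a_n$ if $a_n\notin x_n$ and $q_n\parallel_\PP a_n$, or incompatible with $a_n$ if $a_n\in x_n$ and $q_n\not\leq_\PP^* a_n$), then further into $D_n$. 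The resulting filter $G$ is $\PP$-generic over $\MM$ and satisfies $\sigma^G\neq x_n$ for every $n$, so $\sigma^G\notin M$ --- contradicting the hypothesis. The point you are missing is that the ``no new sets'' hypothesis is used not to \emph{find} a deciding condition inside some given generic, but to rule out the existence of a generic along which $\sigma$ evaluates outside $M$; the construction of such a bad generic from the failure of the set decision property is the actual combinatorics.
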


\begin{proof}
Let $A\subseteq\PP$ be a set of conditions in $M$ and let $p\in\PP$. 
We have to find $q\leq_\PP p$ such that $q\sim_\PP A$. Assume for a contradiction that no such $q$ exists. 

Enumerate (in $\V$) all elements of $\C$ that are dense subsets of $\PP$ by $\langle D_n\mid n\in\omega\rangle$ and 
all subsets of $A$ which are elements of $M$ by $\langle x_n\mid n\in\omega\rangle$.
Let $\sigma=\{\langle\check{a},a\rangle\mid a\in A\}$.
We will find a $\PP$-generic filter $G$ such that $\sigma^G\not\in M$, which clearly contradicts our assumption on $\PP$. 
For this we define a decreasing sequence of conditions $\langle q_n\mid n\in\omega\rangle$ below $p$ and a sequence
$\langle a_n\mid n\in\omega\rangle$ of conditions in $A$. Let $q_0=p$. Given $q_n$, 
note that by our assumption it cannot be the case that $q_n\leq_\PP^* x_n$ and $q_n\bot_\PP(A\setminus x_n)$. Hence there is $a_n\in A$ such that either $a_n\in x_n$ and $q_n\not\leq_\PP^* a_n$ or 
$a_n\notin x_n$ and $q_n\parallel_\PP a_n$. In the first 
case we pick $r\leq_\PP q_n$ such that $r\bot_\PP a_n$. In the second case, we strengthen $q_n$ 
to $r\leq_\PP q_n,a_n$. Now take $q_{n+1}\leq_\PP r$ such that $q_{n+1}\in D_n$. 
Finally, this means that $G=\{q\in\PP\mid\exists n\in\omega\,(q_n\leq_\PP q)\}$ is a generic filter. 
But since $\PP$ doesn't add new sets and since $\one_\PP\Vdash_\PP\sigma\subseteq\check{A}$, there must be 
some $n\in\omega$ such that $\sigma^G=x_n$. But we have that either $a_n\in x_n$ and $a_n\bot_\PP q_{n+1}$, thus
$a_n\notin\sigma^G$, or $a_n\notin x_n$ but $q_{n+1}\leq_\PP a_n$ implying that $a_n\in\sigma^G$. We have thus reached a contradiction.
\end{proof}

Putting together Lemmata \ref{lem:ft} and \ref{lem:no sets->sdp} we obtain

\begin{corollary}\label{thm:sep no sets->ft}
 Every class forcing which does not add new sets satisfies the forcing theorem. 
\end{corollary}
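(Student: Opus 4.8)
The plan is simply to chain together the two lemmas just proved, so there is essentially nothing to do beyond bookkeeping. Given a notion of class forcing $\PP$ for $\MM$ that adds no new sets, the first step is to invoke Lemma~\ref{lem:no sets->sdp}, which tells us that $\PP$ then has the set decision property. The second and final step is to apply Lemma~\ref{lem:ft}: any notion of class forcing for $\MM$ with the set decision property satisfies the forcing theorem over $\MM$. Composing these two implications yields precisely the statement of the corollary.

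I do not anticipate any genuine obstacle, since both implications have already been established in full detail; the only ``work'' is to notice that the hypothesis of Corollary~\ref{thm:sep no sets->ft} is exactly the hypothesis of Lemma~\ref{lem:no sets->sdp} and that its conclusion matches part of the conclusion of Lemma~\ref{lem:ft}. It is worth recording, however, that Lemmata~\ref{lem:ft} and~\ref{lem:no sets->sdp} together give slightly more than the corollary asserts: for a notion of class forcing over $\MM$, not adding new sets and having the set decision property are in fact \emph{equivalent}, and either one of them already suffices for the forcing theorem. So an alternative and marginally stronger way to phrase the result would be: a notion of class forcing for $\MM$ adds no new sets if and only if it has the set decision property, and in that case it satisfies the forcing theorem over $\MM$.
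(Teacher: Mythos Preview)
Your proposal is correct and follows exactly the paper's approach: the corollary is stated immediately after Lemmata~\ref{lem:ft} and~\ref{lem:no sets->sdp} with the remark that putting these together yields the result, which is precisely the chaining you describe. Your additional observation about the equivalence of the set decision property and not adding new sets is accurate and worth noting, though the paper leaves it implicit.
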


%\begin{definition}
%We say that a model $\langle M,\C\rangle$ of $\GB^-$ satisfies \emph{representatives choice}, if for every equivalence relation 
%$E\in\C$ there is $A\in\C$ and a surjective map $\pi:\dom E\ra A$ in $\C$ such that 
%$\langle x,y\rangle\in E$ if and only if $\pi(x)=\pi(y)$. 
%\end{definition}

%Representatives choice is an easy consequence of either global choice or the power set axiom (see \cite[Section 3]{ClassForcing} for a brief argument concerning the %latter), and thus of $\GB$. In \cite[Section 5]{ClassForcing}, we show how representatives choice allows one to pass from $\PP$ to its separative quotient %$\Ss(\PP)=\{[p]\mid p\in\PP\}$, using a suitable definition of $[p]$, where we equip $\Ss(\PP)$ with the usual ordering given by 
%$$[p]\leq[q]\quad\text{iff}\quad\forall r\in\PP(r\parallel p\ra r\parallel q).$$
%Clearly, the canonical map $\PP\ra\Ss(\PP)$ is a surjective complete embedding. 
%Now if $\PP$ adds no sets, then $\Ss(\PP)$ doesn't add any sets either and thus by Corollary \ref{thm:sep no sets->ft}, $\Ss(\PP)$ satisfies 
%the forcing theorem. Thus $\PP$ satisfies the forcing theorem by stipulating
%$p\Vdash_\PP\varphi$ iff $[p]\Vdash_{\Ss(\PP)}\varphi^{\Ss(\PP)}$, where $\varphi^{\Ss(\PP)}$ is defined 
%from $\varphi$ by replacing every $\PP$-name $\sigma$ by $\sigma^{\Ss(\PP)}=\{\langle\tau^{\Ss(\PP)},[q]\rangle\mid\langle\tau,q\rangle\in\sigma\}$. 

%Thus we have shown

%\begin{theorem}
%Representatives choice implies that every class forcing which adds no sets satisfies the forcing theorem. 
%\end{theorem}

In a series of two blog posts (\cite{gitman}), Victoria Gitman claims to show (as a result of discussions with Joel Hamkins) that class forcing that does not add new sets always satisfies the forcing theorem, at least in case the notion of forcing under consideration is ${<}\On$-strategically closed. However in her setup, she allows only for canonical names (or in fact, constant symbols) for sets (and her class names are just classes of pairs of canonical names and forcing conditions), making the property of not adding new sets a trivial consequence. The problem with her approach is however that one cannot actually prove that a given notion of class forcing does not add new sets in her context (even if it is ${<}\On$-closed), or to put it differently, to apply Gitman's results to a specific notion of forcing, one would first need to go through an argument, in the standard setting that allows for the usual names for sets, that the notion of forcing under discussion does not add new sets, thus 
verifying that the usual notion of generic extension corresponds with the one used by Gitman in this case. %To perform such an argument, one would need to go through arguments similar to ours in this chapter.% Moreover, Gitman's approach is limited to notions of forcing that are ${<}\On$-strategically closed.

\section{The Set Reduction Property}\label{srp}

In this section, we introduce a weakening of the set decision property, that we call the \emph{set reduction property}, and verify that it is equivalent to the property that every new set added by $\PP$ is already added by some set-size complete subforcing of $\PP$, and moreover that it still implies the forcing theorem to hold for $\PP$.

\medskip

Let $\MM=\langle M,\C\rangle$ be a fixed countable transitive model of $\GB^-$.

\medskip

\begin{notation}Let $\PP$ be a notion of class forcing for $\MM$.
	\begin{enumerate-(1)}
		\item 
We let $\QQ\setel\PP$ denote the statement that $\QQ$ is a set-sized complete subforcing of $\PP$.
\item Given $p\in\PP$ and $\QQ\setel\PP$, let $\QQ^{\parallel p}$ be the set of conditions in $\QQ$ that are compatible with $p$ in $\PP$.
\item
We say that \emph{every new set added by $\PP$ is added by a set-sized complete subforcing of $\PP$} if whenever $G$ is $\PP$-generic over $\MM$ and $x\in M[G]\setminus M$, then there is $\QQ\setel\PP$ such that $x$ is already an element of the induced $\QQ$-generic extension $M[\bar G]$ of $M$, where $\bar G=G\cap\QQ$.
\end{enumerate-(1)}
\end{notation}

 We will show that any $\PP$ with the property that every new set added by $\PP$ is added by a set-sized complete subforcing of $\PP$ satisfies the forcing theorem, improving our result on the set decision property from Section \ref{sdp}, and also generalizing a classical result of Zarach (\cite{MR0345819}), where he showed that any notion of forcing that is the $\On^M$-length union of complete set-sized subforcings satisfies the forcing theorem.\footnote{A generalization of this result in a different direction was also obtained in \cite[Section 6]{ClassForcing}.} 

\begin{definition}
Let $\PP$ be a  notion of class forcing for $\MM$. We  say that $\PP$ satisfies the \emph{set reduction property (over $\MM$)} if whenever $A\subseteq\PP$ is a set (in $M$) and $p\in\PP$, then there is $q\leq_\PP p$ and $\QQ\setel \PP$ (in $M$) such that $(*)(A,q,\QQ)$ holds: for all $a\in A$, $\{r\in\QQ^{\parallel q}\mid \forall s\leq_\PP q,r\ (s\leq_\PP^* a)$ or $\forall s\leq_\PP q,r\ (s\perp_\PP a)\}$ is dense in $\QQ^{\parallel q}$.
\end{definition}

\begin{remark}
 The set decision property implies the set reduction property, as is witnessed by the trivial forcing notion.
 \end{remark}

\begin{definition}
Given a notion of class forcing $\PP$ for $\MM$, $\sigma\in M^\PP$, $\QQ\setel\PP$ and $q\in\PP$, we define a $\QQ$-name $\sigma_q^\QQ$, the \emph{$q$-reduction of $\sigma$ to $\QQ$}, by recursion as follows. $$\sigma_q^\QQ=\{\langle\tau_q^\QQ,r\rangle\mid r\in\QQ\wedge \exists a\,[\langle\tau,a\rangle\in\sigma\wedge\forall s\leq_\PP q,r\,(s\leq_\PP^*a)]\}$$
\end{definition}

\begin{definition}
  Given a notion of class forcing $\PP$ and a $\PP$-name $\sigma$, we define the conditions appearing in (the transitive closure of) $\sigma$ by induction on name rank as
\[\tc(\sigma)=\bigcup\{\{p\}\cup\tc(\tau)\mid\langle\tau,p\rangle\in\sigma\}.\]
\end{definition}

\begin{lemma}\label{lemma:reduced names}
Suppose that $\PP$ is a notion of class forcing for $\MM$, $q\in\PP$ and $\QQ\setel\PP$, suppose that $(\ast)(A,q,\QQ)$ holds and let $G$ be $\PP$-generic with $q\in G$. Then for every $\sigma\in M^\PP$ with $\tc(\sigma)\subseteq A$, $\sigma^G=(\sigma_q^\QQ)^{\bar G}$, where $\bar G=G\cap\QQ$. 
\end{lemma}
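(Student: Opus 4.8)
The plan is to prove the identity $\sigma^G=(\sigma_q^\QQ)^{\bar G}$ by induction on the name rank of $\sigma$, simultaneously for all $\sigma\in M^\PP$ with $\tc(\sigma)\subseteq A$; note that $\tc(\tau)\subseteq\tc(\sigma)\subseteq A$ for every $\tau$ appearing in $\sigma$, so the induction hypothesis is available for all relevant subnames. Fix such a $\sigma$ and a $\PP$-generic $G$ with $q\in G$, and set $\bar G=G\cap\QQ$, which is $\QQ$-generic over $\MM$ since $\QQ\setel\PP$.

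For the inclusion $(\sigma_q^\QQ)^{\bar G}\subseteq\sigma^G$: suppose $x\in(\sigma_q^\QQ)^{\bar G}$. Then $x=(\tau_q^\QQ)^{\bar G}$ for some $\langle\tau_q^\QQ,r\rangle\in\sigma_q^\QQ$ with $r\in\bar G$, so there is $\langle\tau,a\rangle\in\sigma$ with $\forall s\leq_\PP q,r\ (s\leq_\PP^* a)$. Since $q,r\in G$ and $G$ is a filter, pick $s\in G$ with $s\leq_\PP q,r$; then $s\leq_\PP^* a$, which says $s\Vdash_\PP a\in\dot G$, hence $a\in G$. Therefore $\tau^G\in\sigma^G$, and by the induction hypothesis applied to $\tau$ (using $r\in\bar G$) we get $\tau^G=(\tau_q^\QQ)^{\bar G}=x$, so $x\in\sigma^G$. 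Conversely, suppose $x\in\sigma^G$, say $x=\tau^G$ with $\langle\tau,a\rangle\in\sigma$ and $a\in G$. Pick $p_0\in G$ with $p_0\leq_\PP q,a$. Now apply $(\ast)(A,q,\QQ)$ to $a\in A$: the set $D_a=\{r\in\QQ^{\parallel q}\mid \forall s\leq_\PP q,r\ (s\leq_\PP^* a)\text{ or }\forall s\leq_\PP q,r\ (s\perp_\PP a)\}$ is dense in $\QQ^{\parallel q}$. Since $\QQ$ is a complete subforcing of $\PP$ and $q\in G$, the set of conditions in $\QQ^{\parallel q}$ that lie in $G$ is nonempty and in fact $\bar G$ meets every dense subset of $\QQ^{\parallel q}$ that is definable from our parameters; more carefully, $\{r\in\QQ\mid r\in\bar G\}$ together with genericity of $\bar G$ over $\MM$ yields some $r\in D_a\cap\bar G$ below a condition compatible with $q$ — here I will use that $\bar G$ is $\QQ$-generic and that the downward closure of $D_a$ inside $\QQ$ is dense below any element of $\QQ^{\parallel q}\cap\bar G$, which is nonempty as $q\in G$. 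Fix such an $r\in D_a\cap\bar G$. Then pick $s\in G$ with $s\leq_\PP q,r,a$ (possible as all three are in $G$); since $s\not\perp_\PP a$, the second alternative in the definition of $D_a$ fails, so $\forall s'\leq_\PP q,r\ (s'\leq_\PP^* a)$ holds. Hence $\langle\tau_q^\QQ,r\rangle\in\sigma_q^\QQ$ with $r\in\bar G$, so $(\tau_q^\QQ)^{\bar G}\in(\sigma_q^\QQ)^{\bar G}$, and by the induction hypothesis $(\tau_q^\QQ)^{\bar G}=\tau^G=x$, giving $x\in(\sigma_q^\QQ)^{\bar G}$.

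The main obstacle I anticipate is the second inclusion, specifically the step of extracting $r\in D_a\cap\bar G$: one must argue carefully that $D_a$, or rather its downward closure in $\QQ$, is dense below a suitable element of $\QQ\cap G$, and then invoke genericity of $\bar G$. This requires using that $\QQ\setel\PP$ (so that $\bar G=G\cap\QQ$ is genuinely $\QQ$-generic over $\MM$ and that compatibility in $\QQ$ reflects compatibility in $\PP$ appropriately) and that $q\in G$ guarantees $\QQ^{\parallel q}\cap\bar G\neq\emptyset$. Once density of the relevant set in $\QQ^{\parallel q}$ is massaged into density below a condition of $\bar G$, genericity closes the argument. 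The rest is bookkeeping with the definitions of $\leq_\PP^*$, $\sigma_q^\QQ$, and the evaluation map, together with the routine induction.
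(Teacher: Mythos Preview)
Your proof is correct and follows essentially the same route as the paper: induction on name rank, establishing both inclusions by unwinding the definition of $\sigma_q^\QQ$ and invoking $(\ast)(A,q,\QQ)$. You are in fact more careful than the paper about the one nontrivial point---extracting $r\in D_a\cap\bar G$ via the genericity of $\bar G$ and the observation that $\bar G\subseteq\QQ^{\parallel q}$---which the paper's proof simply asserts without justification.
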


\begin{proof}
We proceed by induction on the name rank of $\sigma$. Suppose that $\tau^G\in\sigma^G$, because there is $a\in G$ so that  $\langle\tau,a\rangle\in\sigma$. Using $(\ast)(A,q,\QQ)$, we can find a condition $r\in\bar G$ such that for all $s\leq_\PP q,r$, it holds that $s\leq^*_{\PP}a$. Then $\langle\tau_q^\QQ,r\rangle\in\sigma_q^\QQ$ and by induction, $(\tau_q^\QQ)^{\bar G}=\tau^G$, hence $\tau^G\in(\sigma_q^{\QQ})^{\bar G}$.
If on the other hand $(\tau_q^{\QQ})^{\bar G}\in(\sigma_q^{\QQ})^G$, because there is $r\in\bar G$ such that $\exists a\,\langle\tau,a\rangle\in\sigma\,\land\,\forall s\le_{\PP}q,r\ s\le^*_{\PP}a$, then inductively $\tau^G=(\tau_q^{\QQ})^{\bar G}\in\sigma^G$.
\end{proof}

\begin{lemma}\label{srp1}
  Every notion of class forcing $\PP$ for $\MM$ with the set reduction property satisfies the forcing theorem.
\end{lemma}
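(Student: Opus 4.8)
The plan is to follow the same template as the proof of Lemma~\ref{lem:ft}, using \cite[Theorem 4.3]{ClassForcing} to reduce the forcing theorem to verifying the definability lemma for the atomic formula $\anf{v_0=v_1}$. So fix $\sigma,\tau\in M^\PP$ and a condition $p\in\PP$; I want to give a $\C$-definition of the class of conditions forcing $\sigma=\tau$. Set $A=\tc(\sigma\cup\tau)\in M$ and apply the set reduction property to $A$ and to each condition below $p$: for every $p'\leq_\PP p$ there is $q\leq_\PP p'$ and $\QQ\setel\PP$ in $M$ with $(\ast)(A,q,\QQ)$. The key observation is that once $(\ast)(A,q,\QQ)$ holds, Lemma~\ref{lemma:reduced names} tells us that for any $\PP$-generic $G\ni q$ we have $\sigma^G=(\sigma_q^\QQ)^{\bar G}$ and $\tau^G=(\tau_q^\QQ)^{\bar G}$ with $\bar G=G\cap\QQ$; since $\sigma_q^\QQ$ and $\tau_q^\QQ$ are \emph{set-sized} $\QQ$-names and $\QQ$ is a set forcing, the ordinary set-forcing theorem applies to $\QQ$, so $q\Vdash_\PP^\MM\sigma=\tau$ if and only if $q\restriction\QQ$ (more precisely: the condition $q$, viewed through the complete embedding) forces $\sigma_q^\QQ=\tau_q^\QQ$ over $\QQ$, which is a ground-model-definable (indeed, ground-model $\ZF^-$-definable) relation.

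The definition then proceeds as follows. I claim that $p\Vdash_\PP\sigma=\tau$ if and only if for every $q\leq_\PP p$ and every $\QQ\setel\PP$ in $M$ such that $(\ast)(A,q,\QQ)$ holds, it is the case that no extension of $q$ forces (over $\PP$) $\sigma\neq\tau$ --- and the latter can be captured by asking that $q$ does not have an extension $r$ with a $\QQ'\setel\PP$, $(\ast)(A,r,\QQ')$, and $r$ forcing $\sigma_r^{\QQ'}\neq\tau_r^{\QQ'}$ over $\QQ'$ in the set-forcing sense. The forward direction is immediate: if $p\Vdash_\PP\sigma=\tau$ then every generic through $p$ evaluates $\sigma$ and $\tau$ equally, so no extension can force them unequal. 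For the backward direction, suppose $p\not\Vdash_\PP\sigma=\tau$; then there is a generic $G\ni p$ with $\sigma^G\neq\tau^G$. Using the set reduction property (applied inside the generic, or rather: by density, since for every $p'\leq_\PP p$ there is a witnessing $q$ and by genericity one such $q$ lies in $G$) I get $q\in G$ with $q\leq_\PP p$ and $\QQ\setel\PP$ with $(\ast)(A,q,\QQ)$; then by Lemma~\ref{lemma:reduced names}, $(\sigma_q^\QQ)^{\bar G}=\sigma^G\neq\tau^G=(\tau_q^\QQ)^{\bar G}$, so by the set-forcing truth lemma for $\QQ$ some $r\in\bar G$ forces $\sigma_q^\QQ\neq\tau_q^\QQ$ over $\QQ$. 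A little care is needed to fold $r$ and $q$ into a single $\PP$-condition witnessing the failure, but since $r\in\QQ^{\parallel q}\cap G$ there is a common extension in $\PP$, and one checks (using that $(\ast)$ is downward-closed in the sense needed, or by re-applying the set reduction property below that common extension) that this produces the required witness.

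The main obstacle I expect is the bookkeeping around complete subforcings: I must make sure that (i) the set-forcing definability and truth lemmas genuinely apply to $\QQ$ and to the reduced names $\sigma_q^\QQ,\tau_q^\QQ$ — this is fine because $\QQ,\sigma_q^\QQ,\tau_q^\QQ\in M$ and $M\models\ZF^-$, which suffices for the set-forcing theorem — and (ii) that the relation ``$q$ forces (over $\QQ$) $\sigma_q^\QQ=\tau_q^\QQ$'' is uniformly definable over $\MM$ as $q$ and $\QQ$ range, since the definition of $\sigma_q^\QQ$ itself quantifies over $\PP$ via the clause $\forall s\leq_\PP q,r\,(s\leq_\PP^* a)$; but this clause is a first-order property of sets with the class parameters $\PP,\leq_\PP$, so $\sigma_q^\QQ$ is obtained by a $\C$-definable recursion from $\sigma,q,\QQ$, and composing with the (set-forcing, hence $M$-definable) forcing relation of $\QQ$ keeps everything in $\C$. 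The remaining subtlety is matching up $\leq_\PP^*$ with $\leq_\PP$ and the compatibility relation correctly when $\PP$ is not separative, and verifying that ``being a set-sized complete subforcing'' is itself expressible so that we may quantify over such $\QQ\in M$; both are routine given the setup, and I would handle them by working with the explicit definitions of $\setel$ and $(\ast)$ rather than any derived properties.
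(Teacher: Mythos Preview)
Your overall strategy is the same as the paper's: reduce via \cite[Theorem 4.3]{ClassForcing} to the definability of $p\Vdash_\PP\sigma=\tau$, set $A=\tc(\sigma\cup\tau)$, use the set reduction property to pass densely to $q\leq_\PP p$ with a witnessing $\QQ\setel\PP$, invoke Lemma~\ref{lemma:reduced names}, and appeal to the set-forcing theorem for $\QQ$. Where your argument falters is at the step where you convert $q\Vdash_\PP\sigma=\tau$ into a $\QQ$-forcing statement. You write ``$q\restriction\QQ$ (more precisely: the condition $q$, viewed through the complete embedding) forces $\sigma_q^\QQ=\tau_q^\QQ$ over $\QQ$'', but the complete embedding goes from $\QQ$ into $\PP$; there is in general no canonical $\QQ$-condition associated to $q\in\PP$. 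Your second attempt has the same defect: ``$r$ forcing $\sigma_r^{\QQ'}\neq\tau_r^{\QQ'}$ over $\QQ'$'' is stated for $r\leq_\PP q$, which again need not lie in $\QQ'$. The workaround you sketch---take a common $\PP$-extension of $r\in\QQ$ and $q$, then re-apply the set reduction property---changes both $q$ and $\QQ$, and hence the reduced names, and you give no argument relating the $\QQ$- and $\QQ''$-forcing statements; nor is $(\ast)(A,q,\QQ)$ obviously downward-closed in $q$.

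The paper avoids all of this by never trying to project $q$ into $\QQ$. Its key claim is that, assuming $(\ast)(A,q,\QQ)$, one has $q\Vdash_\PP\sigma=\tau$ if and only if for every $r_0\in\QQ^{\parallel q}$ there is $r_1\leq_\QQ r_0$ with $r_1\in\QQ^{\parallel q}$ and $r_1\Vdash_\QQ\sigma_q^\QQ=\tau_q^\QQ$. Both directions follow directly from Lemma~\ref{lemma:reduced names} and the set-forcing truth lemma for $\QQ$, with $q$ and $\QQ$ fixed throughout. This density statement over $\QQ^{\parallel q}$ is the missing ingredient: quantify over $\QQ$-conditions compatible with $q$ rather than attempting to turn $q$ itself into a $\QQ$-condition.
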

\begin{proof}
  Let $\PP$ be a notion of class forcing for $\MM$ with the set reduction property.
  We show that $\{\langle p,\sigma,\tau\rangle\in M\mid p\Vdash_\PP\sigma=\tau\}\in\C$, which suffices by \cite[Theorem 4.3]{ClassForcing}. Fix $\PP$-names $\sigma$ and $\tau$ and let $A=\tc(\sigma\cup\tau)$.% and let $Q\setel P$ be the corresponding set provided by the set decision property.
  \begin{claim}
    $p\Vdash_\PP\sigma=\tau\iff\forall q\leq_\PP p\ [\exists \QQ\setel \PP\,(*)(A,q,\QQ)\to q\Vdash_\PP\sigma=\tau]$.
  \end{claim}
  \begin{proof}
    The left to right direction is immediate. For the right to left direction, note that $D=\{q\leq_\PP p\mid\exists \QQ\setel \PP\,(*)(A,q,\QQ)\}\in\C$ by first order class comprehension, and that $D$ is dense below $p$ as a direct consequence of the set reduction property.
  \end{proof}
%   Now given $q\in\QQ$, we want to \emph{reduce} $\sigma$ and $\tau$ to $Q$-names $\bar\sigma_q$ and $\bar\tau_q$ respectively. For this we inductively define the operation $\pi\to\bar\pi_q$ for any $P$-name $\pi$ with $P\cap\tc(\pi)\subseteq A$. Namely, whenever the pair $(\rho,a)$ appears as an element of $\pi$, we remove it and instead put every element of $\{(\bar\rho_q,r)\mid q\,\land\,r\le a\}$ into $\pi$. \footnote{We could at this point rather use an antichain of such $r$, this would perhaps allow us to use forcings with the $\On$-cc, rather than just set-sized ones?}
  \begin{claim}
    Assume that  $\tc(\sigma)\cup\tc(\tau)\subseteq A$ and $(*)(A,q,\QQ)$ holds. Then $q\Vdash_\PP\sigma=\tau$ if and only if
    \[\forall r_0\in \QQ^{\parallel q}\,\exists r_1\in\QQ^{\parallel q}(r_1\le_{\QQ}r_0\,\land\,r_1\Vdash_\QQ\sigma_q^\QQ=\tau_q^\QQ.)\]
  \end{claim}
  \begin{proof}
    For the forward direction, assume that $q\Vdash_\PP\sigma=\tau$ and that $r_0\in \QQ^{\parallel q}$. Let $G$ be $\PP$-generic with $r_0,q\in G$, hence $\sigma^G=\tau^G$. Let $\bar G$ denote the $\QQ$-generic induced by $G$, that is $\bar G=G\cap\QQ$. By Lemma \ref{lemma:reduced names}, $(\sigma_q^\QQ)^{\bar G}=(\tau_q^\QQ)^{\bar G}$. Let $r_1\leq_\QQ r_0$ be a condition in $\bar G$ forcing this. Then $r_1\in Q^{\parallel q}$ and $r_1\Vdash_{\QQ}\bar\sigma_q=\bar\tau_q$.
%     We show that $(\sigma_q^\QQ)^{\bar G}=(\tau_q^\QQ)^{\bar G}$. 
%     Suppose that $\langle\rho,a\rangle\in\sigma$ and $r\in\bar G$ such that for all $s\leq_\PP q,r$, $s\leq_\PP^*a$. Since both $q$ and $r$ are in $G$ there is $s\in G$ with $s\leq_\PP q,r$. Then $s\leq_\PP^* a$. But then $a\in G$ and so $\rho^G\in\sigma^G=\tau^G$. This means that there must be $\langle\mu,b\rangle\in\tau$ such that $b\in G$ and $\mu^G=\rho^G$. By induction, we have that $(\mu_q^\QQ)^{\bar G}=(\rho_q^\QQ)^{\bar G}$. Using $(\ast)(A,q,\QQ)$ there must be some $r\in\bar G$ such that for all $s\leq_\PP q,r(s\leq_\PP^*b)$. Then $(\mu_q^\QQ)^{\bar G}\in(\tau_q^\QQ)^{\bar G}$ as desired. 
    
    For the backward direction, suppose that the right-hand side holds and let $G$ be $\PP$-generic with $q\in G$. Let $\bar G=G\cap\QQ$. Take $r\in\bar G$ with $r\Vdash_\QQ\sigma_q^\QQ=\tau_q^\QQ$. 
    Then $\sigma^G=\tau^G$ by Lemma \ref{lemma:reduced names}. 
    %As before, it follows that $\sigma^G=\tau^G$. 
    Since $G$ was arbitrary, this means that $q\Vdash_\PP\sigma=\tau$. 
%     By the definition of $\bar\sigma_q$ and $\bar\tau_q$, it follows that $\bar\sigma_q^{\bar G}=\bar\tau_q^{\bar G}$. Let $r_1\le r_0$, $r_1\in\bar G$ be a condition forcing this. Then $r_1\in Q^{\parallel q}$ and $r_1\Vdash_Q\bar\sigma_q=\bar\tau_q$.
%     
%     For the right to left direction, let $G$ be $P$-generic with $q\in G$. Let $r\in\bar G$ be such that $r\Vdash_Q\bar\sigma_q=\bar\tau_q$. Then $\bar\sigma_q^{\bar G}=\bar\tau_q^{\bar G}$. It follows that $\sigma^G=\tau^G$, hence $q\Vdash\sigma=\tau$.
  \end{proof}

Note that since $\QQ$ is a notion of set forcing, it satisfies the forcing theorem, and thus the $\QQ$-forcing relation is definable over $M$. Using the above claims, it is immediate that $\{\langle p,\sigma,\tau\rangle\in M\mid p\Vdash_\PP\sigma=\tau\}$ is definable over $\langle M,\PP,\le_\PP\rangle$, and is thus an element of $\C$.
\end{proof}

\begin{lemma}\label{srp2}
 Let $\PP$ be a notion of class forcing for $\MM$. Then $\PP$ has the set reduction property if and only if every new set added by $\PP$ is added by a set-sized complete subforcing of $\PP$. 
\end{lemma}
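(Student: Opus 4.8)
The plan is to prove both implications by direct translation between the combinatorial statement $(\ast)(A,q,\QQ)$ and the semantic statement about names. For the forward direction, assume the set reduction property. Let $G$ be $\PP$-generic over $\MM$ and let $x\in M[G]\setminus M$; fix $\sigma\in M^\PP$ with $\sigma^G=x$. Set $A=\tc(\sigma)$ (a set in $M$), and apply the set reduction property to $A$ together with an arbitrary $p\in G$: the set $D=\{q\le_\PP p\mid \exists\QQ\setel\PP\,(\ast)(A,q,\QQ)\}$ is dense below $p$ and lies in $\C$, so by genericity there is $q\in G\cap D$, witnessed by some $\QQ\setel\PP$ in $M$. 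Then Lemma \ref{lemma:reduced names} applies (since $\tc(\sigma)\subseteq A$), giving $x=\sigma^G=(\sigma_q^\QQ)^{\bar G}$ with $\bar G=G\cap\QQ$, so $x\in M[\bar G]$. Hence every new set is added by a set-sized complete subforcing.

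For the converse, assume every new set added by $\PP$ is added by a set-sized complete subforcing, and suppose towards a contradiction that the set reduction property fails: there are $A\subseteq\PP$ in $M$ and $p\in\PP$ such that for no $q\le_\PP p$ and $\QQ\setel\PP$ in $M$ does $(\ast)(A,q,\QQ)$ hold. As in the proof of Lemma \ref{lem:no sets->sdp}, I would consider the name $\sigma=\{\langle\check a,a\rangle\mid a\in A\}$, so that $\sigma^G=\{a\in A\mid a\in G\}$ for any generic $G$; this is always a subset of $A$, hence a candidate for a ``new set'' whose only possible values are the subsets of $A$ lying in $M$ together with subsets not in $M$. The idea is to build, by a bookkeeping construction over $\V$ (using countability of $\MM$), a $\PP$-generic filter $G$ below $p$ such that $\sigma^G\notin M[\bar G]$ for every $\QQ\setel\PP$ in $M$ and every $\bar G=G\cap\QQ$ — contradicting the hypothesis, since $\sigma^G$ is then a new set added by $\PP$ but by no set-sized complete subforcing. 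To diagonalize, enumerate in $\V$ all dense subclasses $\langle D_n\mid n\in\omega\rangle$ of $\PP$ in $\C$, all $\QQ_n\setel\PP$ in $M$, and all $\QQ_n$-names $\rho_n\in M^{\QQ_n}$ that could compute a subset of $A$; at stage $n$, having built $q_n\le_\PP p$, the failure of $(\ast)(A,q_n,\QQ_n)$ (applied to the current approximation) produces some $a\in A$ for which the decision-set is not dense below some $r\in\QQ_n^{\parallel q_n}$, and this is precisely the room needed to steer $G$ so that $\sigma^G$ and $\rho_n^{\bar G}$ disagree about membership of $a$; then extend into $D_n$.

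The main obstacle is the converse direction, specifically the bookkeeping: one must simultaneously defeat all candidate reductions $\rho_n^{\bar G}$ while the induced generic $\bar G=G\cap\QQ_n$ is itself only determined as $G$ is built, so the ``current approximation'' to $\bar G$ at stage $n$ must be handled carefully (one works with the finite condition $q_n$ and extends inside $\QQ_n$ compatibly). I expect that the clean way to organize this is to first prove the contrapositive in the form: if for cofinally-below-$p$ many $q$ there is no witnessing $\QQ$, then the generic value $\sigma^G$ escapes every $M[\bar G]$; and to note that the reduced name $\sigma_q^\QQ$ from Lemma \ref{lemma:reduced names} is, up to equivalence, the \emph{only} way a set-sized complete subforcing could produce $\sigma^G$ — so failure of $(\ast)$ for $A\supseteq\tc(\sigma)$ genuinely blocks all reductions. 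Granting that, the diagonalization is a routine refinement of the argument in Lemma \ref{lem:no sets->sdp}, and the forward direction is immediate from Lemma \ref{lemma:reduced names} as sketched above.
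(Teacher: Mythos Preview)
Your proposal is correct and follows essentially the same approach as the paper: the forward direction is Lemma~\ref{lemma:reduced names} applied at a generically chosen $q$, and the converse is a diagonalization below $p$ against all pairs $(\QQ_n,\rho_n)$, using the failure of $(\ast)(A,q_n,\QQ_n)$ to find $a_n\in A$ and some $r_1\in\QQ_n^{\parallel q_n}$ deciding $\check a_n\in\rho_n$ while still leaving room to steer $a_n$'s membership in $G$ the opposite way. Your aside that the reduced name $\sigma_q^\QQ$ is ``the only way'' a set-sized complete subforcing could recover $\sigma^G$ is unnecessary (and not quite accurate) --- the paper, like your actual sketch, simply diagonalizes against all $\QQ_n$-names directly, and the only nontrivial point it spells out that you leave implicit is that completeness of $\QQ_n$ in $\PP$ guarantees such a deciding $r_1$ exists in $\QQ_n^{\parallel q_n}$.
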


\begin{proof}
The forward direction is immediate by Lemma \ref{lemma:reduced names}.
For the backward direction, let $A\subseteq\PP$ be a set of conditions and let $\sigma=\{\langle\check{a},a\rangle\mid a\in A\}$.
Assume that every new set added by $\PP$ is added by a set-sized complete subforcing of $\PP$. However, suppose for a contradiction that $\PP$ does not have the set reduction property, as is witnessed by $A\in M$, i.e.\ there is $p\in\PP$ such that for every $q\leq_\PP p$ and every $\QQ\setel\PP$ in $M$, there is $a\in A$ so that \[\bar D_{q,a}=\{r\in\QQ^{\parallel q}\mid \forall s\leq_\PP q,r\,(s\leq_\PP^* a)\text{ or }\forall s\leq_\PP q,r\,(s\perp_\PP a)\}\] is not dense in $\QQ^{\parallel q}$. We want to use this assumption to find a $\PP$-generic filter $G$ over $\MM$ such that $\sigma^G$ does not lie in the induced $\QQ$-generic extension for any $\QQ\setel \PP$, i.e.\ not every new set is added by a set-sized complete subforcing.

We enumerate all dense subclasses of $\PP$ which are in $\C$ (from the outside) by $\langle D_n\mid n\in\omega\rangle$,
all $\QQ\setel\PP$ by $\langle\QQ_n\mid n\in\omega\rangle$ so that every $\QQ\setel\PP$ is enumerated unboundedly often, and we let $\langle \rho_n\mid n\in\omega\rangle$ be so that each $\rho_n$ is a $\QQ_n$-name for a subset of $A$, and so that for every $i\in\omega$, every $\QQ_i$-name $\rho$ 
%for a subset of $A$ 
is enumerated as some $\rho_n$.

Now we define a decreasing sequence of conditions $\langle q_n\mid n\in\omega\rangle$ below $p$ and a sequence
$\langle a_n\mid n\in\omega\rangle$ of conditions in $A$. Let $q_0=p$. Given $q_n\leq_\PP p$, we use our assumption to pick $a_n\in A$ such that $\bar D_{q_n,a_n}$ is not dense in $\QQ_n^{\parallel q_n}$. We may thus pick $r_0\in\QQ_n^{\parallel q_n}$ such that no $r_1\leq_{\QQ_n} r_0$ lies in this set. Pick $r_1\leq_{\QQ_n} r_0$ in $\QQ_n^{\parallel q_n}$ which decides whether or not $\check a_n\in\rho_n$. This can be done because if $B$ is a maximal antichain, of conditions below $r_0$ in $\QQ_n$ which decide whether or not $\check a_n\in\rho_n$, then $B$ is also maximal below $r_0$ in $\PP$, since $\QQ_n$ is a complete subforcing of $\PP$. In particular there must be $r_1\in B$ which is compatible with $q_n$ in $\PP$. 

Since $r_1\not\in\bar D_{q_n,a_n}$, we may now pick $\tilde q_n\leq_\PP q_n,r_1$ such that $\tilde q_n\perp_\PP a_n$ in case $r_1\Vdash_{\QQ_n}\check a_n\in\rho_n$, and such that $\tilde q_n\leq_\PP^* a_n$ in case $r_1\Vdash_{\QQ_n} \check a_n\not\in\rho_n$. Now take $q_{n+1}\leq_\PP\tilde q_n$ such that $q_{n+1}\in D_n$. 
In the end, this constructions yields a $\PP$-generic filter $G=\{q\in\PP\mid\exists n\in\omega\,(q_n\leq_\PP q)\}$. 
But since every new set added by $\PP$ is added by a set-sized complete subforcing by assumption, and since $\one_\PP\Vdash_\PP\sigma\subseteq\check{A}$, there must be some $n\in\omega$ such that $M[G]\models\sigma^G=\rho_n^{\bar G_n}$, where $\bar G_n=G\cap\QQ_n$. 
But either $q_{n+1}\Vdash_\PP\check a_n\in\rho_n$ and $a_n\bot_\PP q_{n+1}$, thus $a_n\notin\sigma^G$, or $q_{n+1}\Vdash_\PP\check a_n\notin\rho_n$ and $q_{n+1}\leq_\PP^* a_n$, implying that $a_n\in\sigma^G$. 
Thus $\sigma^G\ne\rho_n^{\bar G_n}$, and we have reached a contradiction. 
\end{proof}

Putting together Lemma \ref{srp1} and \ref{srp2} we obtain

\begin{corollary}%removed label
  If $\PP$ is a notion of class forcing such that every new set added by $\PP$ is already added by a set-sized complete subforcing of $\PP$, then $\PP$ satisfies the Forcing Theorem.
\end{corollary}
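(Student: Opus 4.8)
The plan is simply to chain the two preceding lemmas, since all of the real content has already been established. Assume $\PP$ is a notion of class forcing for $\MM$ with the property that every new set added by $\PP$ is already added by a set-sized complete subforcing of $\PP$. First I would invoke the backward direction of Lemma \ref{srp2}, which tells us that this hypothesis is equivalent to $\PP$ satisfying the set reduction property over $\MM$. (The nontrivial work here is the construction in the proof of Lemma \ref{srp2}: from a failure of the set reduction property one builds, via a bookkeeping enumeration of the dense subclasses, the complete set-sized subforcings, and suitable $\QQ_n$-names, a $\PP$-generic filter $G$ whose evaluation $\sigma^G$ of the canonical name $\sigma=\{\langle\check a,a\rangle\mid a\in A\}$ provably differs from every $\rho_n^{\bar G_n}$, contradicting the assumption.)

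Having obtained the set reduction property for $\PP$, I would then apply Lemma \ref{srp1} directly to conclude that $\PP$ satisfies the forcing theorem over $\MM$, which is exactly the assertion of the corollary. Recall that the proof of Lemma \ref{srp1} reduces, via \cite[Theorem 4.3]{ClassForcing}, to showing that $\{\langle p,\sigma,\tau\rangle\in M\mid p\Vdash_\PP\sigma=\tau\}\in\C$; this is done by the two claims there, which express $p\Vdash_\PP\sigma=\tau$ in terms of the existence of a witnessing $\QQ\setel\PP$ with $(\ast)(A,q,\QQ)$ (where $A=\tc(\sigma\cup\tau)$) together with the definable $\QQ$-forcing relation on the $q$-reductions $\sigma_q^\QQ,\tau_q^\QQ$, using Lemma \ref{lemma:reduced names} to transfer evaluations between $\PP$ and $\QQ$.

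There is essentially no obstacle to overcome at this stage: the statement is a formal consequence of Lemmas \ref{srp1} and \ref{srp2}, and the only thing to be careful about is that the two lemmas are stated for the \emph{same} notion of class forcing $\PP$ over the \emph{same} fixed countable transitive model $\MM\models\GB^-$, which is the case here. Accordingly the proof is a one-line deduction: by Lemma \ref{srp2}, $\PP$ has the set reduction property, and hence by Lemma \ref{srp1}, $\PP$ satisfies the forcing theorem.
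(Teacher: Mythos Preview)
Your proposal is correct and matches the paper's own argument exactly: the corollary is stated immediately after Lemmas \ref{srp1} and \ref{srp2} with the remark ``Putting together Lemma \ref{srp1} and \ref{srp2} we obtain\ldots'', i.e.\ the proof is precisely the one-line deduction you describe. There is nothing to add.
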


\section{Approachability by projections}\label{approachability}

In this short section, we want to generalize the property of the same name that was introduced in \cite[Section 6]{ClassForcing}. We want to use the very same name for this generalized property, as we think that this new property is what approachability by projections should have been defined as in the first place, while the property from \cite[Section 6]{ClassForcing} should perhaps be renamed as \emph{ordinal approachability by projections} (see our below remarks). We start by isolating a strong projection property (this very same property was already used in \cite[Section 6]{ClassForcing}). Note that in the below, (1)--(3) are the usual defining properties of a projection.

\begin{definition} 
Suppose that $\QQ$ is a subforcing of $\PP$ containing $X$ as a subset of its domain. A \emph{projection $\pi\colon \PP\rightarrow \QQ$ respecting $X$} is a function satisfying the following properties. 
\begin{enumerate}
  \item $\pi(\one_\PP)=\one_\PP$,
  \item $\forall p,q\in P\ (p\leq_\PP q\to\pi(p)\leq_{\PP}\pi(q))$,
  \item $\forall p\in P\,\forall q\leq_{\QQ}\pi(p)\,\exists r\leq_\PP p\ ( \pi(r)\leq_{\PP} q)$,
  \item $\forall p\in X\,\forall q\in P\ (\pi(q)\leq_{\PP} p\to q\leq_\PP p)$ and
  \item $\pi$ is the identity on $X$.
\end{enumerate}  
\end{definition} 

\begin{definition}\label{appproj}
Let $\MM\models\GB^-$ and let $\PP$ be a notion of class forcing for $\MM$. We say that $\PP=\langle P,\leq_\PP\rangle$ is \emph{approachable by projections} if there is a class $\Pi\in\C$ such that its sections $\pi_{X,y}=\{(u,v)\mid (X,y,u,v)\in\Pi\}$ have the following property. For all subsets $X$ of $\PP$, there is a set $y$ and a set-sized subforcing $\QQ$ of $\PP$ containing $X\cup\{\one_\PP\}$ as a subset such that $\pi_{X,y}\colon \PP\rightarrow \QQ$ is a projection respecting $X$.  
%Moreover, we require that the classes $\pi_{X,y}$ are coded by a single class, i.e. 
%$$\{\langle  X ,y, p, \pi_{X,y}(p)\rangle\mid X,y\in M,p\in P\}\in\C.$$ 
\end{definition}

\begin{lemma}\label{lemma:col appr}
  Let $\MM\models\GB^-$, let $\gamma\in\On^M$, let $Y$ be a proper class of $\MM$ and let $\PP=\Col(\gamma,Y)^M$. Then $\PP$ is approachable by projections.
\end{lemma}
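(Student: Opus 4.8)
I want to exhibit a single class $\Pi \in \C$ whose sections $\pi_{X,y}$ are projections respecting $X$, where for a given subset $X$ of $\PP = \Col(\gamma,Y)^M$ the auxiliary set $y$ will encode a ``small'' subclass $Z \subseteq Y$ that captures all the range-values used in $X$. Concretely, given $X \subseteq \PP$, let $Z = \bigcup_{p \in X} \range(p) \in M$ (this is a set since $X \in M$ and each condition has small domain), let $y = Z$, and let $\QQ = \QQ_X$ be the restriction $\Col(\gamma, Z)^M$, or rather the subforcing of $\PP$ consisting of all conditions $p \in \PP$ with $\range(p) \subseteq Z$. Then $X \cup \{\one_\PP\} = X \cup \{\emptyset\} \subseteq \QQ$, and $\QQ$ is a set-sized subforcing since $Z$ is a set and $\gamma$ is an ordinal (so there are only set-many partial functions from $\gamma$ to $Z$ of small domain). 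The projection $\pi_{X,Z}\colon \PP \to \QQ$ I would define by ``retracting'' a condition $p$ to its part that already lives inside $Z$: fix once and for all (using nothing beyond $\GB^-$, e.g.\ by choosing a point we can name uniformly, or just by allowing $\pi$ to shrink the domain) and set $\pi_{X,Z}(p) = p \restriction \{\, \alpha \in \dom(p) \mid p(\alpha) \in Z \,\}$, i.e.\ throw away the coordinates whose value lies outside $Z$.

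\textbf{Verifying the projection axioms.} With this definition: (1) $\pi_{X,Z}(\one_\PP) = \pi_{X,Z}(\emptyset) = \emptyset = \one_\PP$. (2) If $p \leq_\PP q$, i.e.\ $p \supseteq q$ as functions, then the set of coordinates of $q$ landing in $Z$ is a subset of the corresponding set for $p$ and the values agree there, so $\pi_{X,Z}(p) \supseteq \pi_{X,Z}(q)$, i.e.\ $\pi_{X,Z}(p) \leq_\PP \pi_{X,Z}(q)$. (4) and (5): if $p \in X$ then every value of $p$ lies in $Z = \bigcup_{p'\in X}\range(p')$, so $\pi_{X,Z}(p) = p$, which is (5); and (4) (the ``respecting $X$'' clause) follows because for $p \in X$ and any $q \in P$ with $\pi_{X,Z}(q) \leq_\PP p$, i.e.\ $\pi_{X,Z}(q) \supseteq p$, we get $q \supseteq \pi_{X,Z}(q) \supseteq p$, hence $q \leq_\PP p$. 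The only clause requiring genuine work is (3), the mixing/refinement property: given $p \in P$ and $q \leq_\QQ \pi_{X,Z}(p)$, i.e.\ $q \in \QQ$ with $q \supseteq \pi_{X,Z}(p)$, I need $r \leq_\PP p$ with $\pi_{X,Z}(r) \leq_\QQ q$. The natural candidate is $r = p \cup (q \restriction (\dom(q) \setminus \dom(p)))$, i.e.\ keep all of $p$ and graft on the new coordinates from $q$. One checks $r$ is a legitimate condition (the two partial functions are compatible: on $\dom(p) \cap \dom(q)$, $q$ agrees with $\pi_{X,Z}(p) \subseteq p$; its domain is a union of two small sets hence small), that $r \supseteq p$ so $r \leq_\PP p$, and that $\pi_{X,Z}(r) = q$: indeed the $Z$-valued coordinates of $r$ are exactly those of $\pi_{X,Z}(p)$ together with the newly added coordinates of $q$ (which all have values in $Z$ since $q \in \QQ$), and since $q \supseteq \pi_{X,Z}(p)$ this is exactly $q$.

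\textbf{Definability.} Finally I must check $\Pi \in \C$, i.e.\ that the assignment $(X,y,p) \mapsto \pi_{X,y}(p)$ is first-order definable over $\langle M, \in\rangle$ (uniformly, with $X, y$ ranging over all subsets of $\PP$ that arise). Since the definition of $\pi_{X,Z}(p)$ only refers to $p$ and $Z$ via $\{\alpha \in \dom(p) \mid p(\alpha) \in Z\}$ — not to $X$ itself beyond the derived set $Z$ — the relation
\[
\Pi = \bigl\{\, (X, y, u, v) \;\big|\; y = \textstyle\bigcup_{p \in X}\range(p),\ u \in \PP,\ v = u \restriction \{\alpha \in \dom(u) \mid u(\alpha) \in y\} \,\bigr\}
\]
is defined by a first-order $\in$-formula with no class parameters, hence lies in $\C$ by first-order class comprehension. (The variant $\Col_*$, which is addressed separately in the paper via the set decision property, is not needed here.)

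\textbf{Expected main obstacle.} The conceptual content is entirely in choosing $\QQ$, $y$, and $\pi$; once the ``forget the non-$Z$ coordinates'' idea is in place everything is a routine verification. The one point that deserves care is clause (3), and more precisely making sure the grafted condition $r$ genuinely has small domain and is a function — this is where the hypothesis that conditions have domain of size less than the $M$-cardinality of $\gamma$ is used (a union of two such sets is again such, as $\gamma$ is at least the relevant cardinal) — together with double-checking that $\pi_{X,Z}$ really is a \emph{total} function on all of $P$ and not merely on conditions related to $X$, so that the single class $\Pi$ simultaneously works for every subset $X$.
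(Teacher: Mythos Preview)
Your verification of clause~(3) has a genuine gap, and in fact your map $\pi_{X,Z}(p)=p\restriction\{\alpha\in\dom(p)\mid p(\alpha)\in Z\}$ is \emph{not} a projection. The problem is that $q\supseteq\pi_{X,Z}(p)$ constrains $q$ only on $\dom(\pi_{X,Z}(p))$, not on all of $\dom(p)\cap\dom(q)$: nothing prevents $q$ from being defined at some $\alpha\in\dom(p)$ where $p(\alpha)\notin Z$, and there $q(\alpha)\in Z$ must differ from $p(\alpha)$. For a concrete counterexample take $\gamma=\omega$, $Z=\{0\}$, $p=\{\langle 0,1\rangle\}$; then $\pi_{X,Z}(p)=\emptyset$, and $q=\{\langle 0,0\rangle\}$ satisfies $q\leq_\QQ\pi_{X,Z}(p)$. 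Any $r\leq_\PP p$ must have $r(0)=1\notin Z$, so $0\notin\dom(\pi_{X,Z}(r))$ and hence $\pi_{X,Z}(r)\not\supseteq q$. Thus no $r$ as required exists. Your sentence ``on $\dom(p)\cap\dom(q)$, $q$ agrees with $\pi_{X,Z}(p)\subseteq p$'' is precisely where the error lies: agreement holds only on $\dom(\pi_{X,Z}(p))$.

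The paper fixes this by not shrinking domains: it picks some $y\in Y\setminus\range X$ (which exists because $Y$ is a proper class) and defines $\pi_{X,y}(p)$ to have the \emph{same} domain as $p$, sending $p(i)\mapsto p(i)$ when $p(i)\in\range X\cup\{y\}$ and $p(i)\mapsto y$ otherwise, with target $\QQ=\Col(\gamma,\range X\cup\{y\})$. Now $q\leq_\QQ\pi_{X,y}(p)$ forces $\dom(q)\supseteq\dom(p)$ and $q\restriction\dom(p)=\pi_{X,y}(p)$, so the grafted condition $r=p\cup(q\restriction(\dom(q)\setminus\dom(p)))$ genuinely satisfies $\pi_{X,y}(r)=q$. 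The extra parameter $y$ is exactly what makes (3) go through; your ``forget the non-$Z$ coordinates'' idea is natural but does not survive this clause.
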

\begin{proof}
  For a subset $X$ of $\PP$ in $M$ and $y\in M$, let $\range X:=\bigcup\{\range q\mid q\in X\}$ and let $\pi_{X,y}$ be trivial if $y\in\range X$ or if $y\not\in Y$, and otherwise let it map $p\in\PP$ to $\bar p\in\Col(\gamma,\range X\cup\{y\})$ by sending $p(i)$ to itself whenever $p(i)\in\range X\cup\{y\}$ and sending it to $y$ otherwise. Since $X$ is set-sized, there will be some $y\in Y$ such that $\pi_{X,y}$ is nontrivial. Verifying that these $\pi_{X,y}$ are projections respecting $X$ is now an easy exercise that we will leave to the reader.
\end{proof}

Approachability by projections in the sense of \cite[Section 6]{ClassForcing} is the special case when the set-size subforcings $\QQ$ of $\PP$ are always required to be of the form $\QQ_{\alpha+1}$ for an increasing sequence $\langle\QQ_\alpha\mid\alpha\in\On^M\rangle\in\C$ with union $\PP$. Our redefined property is strictly weaker than approachability by projections in the sense of \cite[Section 6]{ClassForcing}. For instance, the forcing notion $\Col(\omega,\mathcal P(\omega))$ is approachable by projections in the sense of this paper. It is not approachable by projections in the sense of \cite[Section 6]{ClassForcing} in a model of $\mathsf{ZFC}^-$ with the property that every set is countable and every set of reals has the property of Baire, since in such a model, there is no prewellordering of ${}^\omega2$ of length $\On$ whose equivalence classes are sets. This follows easily from the Kuratowski-Ulam theorem.  

%\todo[inline]{Add citation?}

\begin{lemma}\label{projectionpreservesforcing}
Suppose that $\pi\colon \PP\rightarrow \QQ$ is a projection respecting $\tc(\sigma\cup\tau)$ and $p\in \PP$. Then 
$$p\Vdash_\PP \sigma\subseteq \tau \Leftrightarrow \pi(p)\Vdash_\QQ \sigma\subseteq \tau.$$ 
\end{lemma}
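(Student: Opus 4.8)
The plan is to prove both implications by reducing the statement $p\Vdash_\PP\sigma\subseteq\tau$ to a statement about evaluations of names, using the fact that $\pi$ respects $X:=\tc(\sigma\cup\tau)$, together with a transfer of generic filters back and forth between $\PP$ and $\QQ$. The key combinatorial fact I would establish first is that, because $\pi$ is the identity on $X\supseteq\tc(\sigma)\cup\tc(\tau)$ and satisfies clause (4) of the projection definition, the $\PP$-name $\sigma$ and the $\QQ$-name $\sigma$ (which is literally the same set, since all conditions occurring in $\sigma$ and in its transitive closure lie in $X\subseteq\QQ$) evaluate the same way under matching filters. Precisely: if $G$ is $\PP$-generic over $\MM$ then $\bar G:=G\cap\QQ$ is $\QQ$-generic over $\MM$ (clause (3) gives that $\pi$-preimages of dense sets are dense, hence $\bar G$ meets every dense subset of $\QQ$ in $\C$), and by induction on name rank one checks that $\rho^{G}=\rho^{\bar G}$ for every $\PP$-name $\rho$ with $\tc(\rho)\subseteq X$; the inductive step uses clause (4), namely that for $a\in X$ one has $a\in\bar G\iff a\in G$, which holds since $\pi(q)\leq_\PP a$ iff $q\leq_\PP a$ when $a\in X$, so $G$ and $\bar G$ agree on membership of conditions from $X$. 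Conversely, if $H$ is $\QQ$-generic over $\MM$ with $\pi(p)\in H$, I would use clause (3) and a standard genericity argument to produce a $\PP$-generic filter $G$ over $\MM$ with $p\in G$ and $G\cap\QQ\supseteq H$ (in fact $=H$ after intersecting), by meeting the countably many dense classes of $\PP$ in $\C$ one at a time below conditions whose $\pi$-images lie in $H$; here countability of $\MM$ is what makes this possible.

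With these transfer facts in hand, the two directions are symmetric. For the forward direction, suppose $p\Vdash_\PP\sigma\subseteq\tau$ and let $H$ be $\QQ$-generic over $\MM$ with $\pi(p)\in H$; build a $\PP$-generic $G$ with $p\in G$ and $G\cap\QQ=H$; then $\sigma^G\subseteq\tau^G$, and by the equality of evaluations $\sigma^{H}=\sigma^{G}\subseteq\tau^{G}=\tau^{H}$, so $\pi(p)\Vdash_\QQ\sigma\subseteq\tau$. For the backward direction, suppose $\pi(p)\Vdash_\QQ\sigma\subseteq\tau$ and let $G$ be $\PP$-generic over $\MM$ with $p\in G$; set $\bar G=G\cap\QQ$, which is $\QQ$-generic, and note $\pi(p)\in\bar G$ by clause (2) (since $p\leq_\PP p$ gives nothing, but $p\in G$ and $\pi$ monotone together with $\pi(\one_\PP)=\one_\PP$ and the filter property yield $\pi(p)\in\bar G$ — more carefully, $\bar G$ is generated by $\{\pi(q)\mid q\in G\}$, so $\pi(p)\in\bar G$); then $\sigma^{\bar G}\subseteq\tau^{\bar G}$, and again $\sigma^G=\sigma^{\bar G}\subseteq\tau^{\bar G}=\tau^G$, so $p\Vdash_\PP\sigma\subseteq\tau$.

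I expect the main obstacle to be the careful verification that $\bar G=G\cap\QQ$ is genuinely $\QQ$-generic and that $\pi(p)\in\bar G$, i.e.\ getting the interaction between the projection clauses and the filter/density bookkeeping exactly right; the name-evaluation induction is routine once one observes that $\sigma$ is simultaneously a $\PP$-name and a $\QQ$-name with the same transitive closure, and that clause (4) forces $G$ and $G\cap\QQ$ to decide membership of conditions in $X$ identically. A secondary point to handle with care is the construction of a $\PP$-generic $G$ extending a given $\QQ$-generic $H$ below $p$: one must check at each stage that the relevant dense class of $\PP$ can be met by a condition whose $\pi$-image stays in $H$, which is exactly what clause (3) delivers, and that the resulting filter really does satisfy $G\cap\QQ=H$ rather than something larger, which follows because $H$ is already generic over $\MM$ and hence an ultrafilter on the separative quotient of $\QQ$ restricted to $\C$-dense sets. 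Since the statement only concerns the single atomic formula $\anf{\sigma\subseteq\tau}$, no appeal to the full forcing theorem for $\PP$ is needed — indeed this lemma is presumably a stepping stone toward proving the forcing theorem for forcings approachable by projections.
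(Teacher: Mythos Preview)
Your semantic approach---transfer generic filters between $\PP$ and $\QQ$ and compare evaluations---differs from the paper's, which simply cites \cite[Theorem 6.4]{ClassForcing}; that argument is a simultaneous induction on name rank for the two atomic formulas $\sigma\in\tau$ and $\sigma\subseteq\tau$, working directly with the (combinatorial) forcing relation rather than passing through generics. Your route is arguably more transparent, and it sidesteps the mutual recursion by reducing everything to the single observation that $\rho^G$ depends only on $G\cap\tc(\rho)$.

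There is, however, one genuine slip you should fix. The object $\bar G:=G\cap\QQ$ need not be $\QQ$-generic, nor even a filter on $\QQ$: two elements of $\QQ$ compatible in $\PP$ need not be compatible in $\QQ$, and your density argument (``$\pi$-preimages of dense sets are dense'') only yields some $p\in G$ with $\pi(p)$ below an element of $D$, which does not put anything into $G\cap\QQ$. The correct filter is the upward closure $H'$ of $\pi[G]$ in $\QQ$---your own parenthetical ``more carefully, $\bar G$ is generated by $\{\pi(q)\mid q\in G\}$'' already names it. For $H'$ one checks genericity via the standard projection argument, and then clauses (4) and (5) give $H'\cap X=G\cap X$: if $a\in H'\cap X$ then $\pi(p')\leq_\PP a$ for some $p'\in G$, so $p'\leq_\PP a$ by (4), hence $a\in G$; conversely $a\in G\cap X$ gives $a=\pi(a)\in H'$ by (5).

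For the forward direction your descending-sequence construction with $\pi(p_n)\in H$ is right; the concern about whether the induced $\QQ$-generic equals $H$ exactly is resolved by noting that two $\QQ$-generic filters, one contained in the other, must coincide (meet the dense class $\{r:r\leq q\text{ or }r\perp q\}$ for any $q$ in the difference). This yields $H\cap X=G\cap X$ without further bookkeeping.
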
 
\begin{proof} 
  Like in the proof of \cite[Theorem 6.4]{ClassForcing}. We leave the checking of the details to the interested reader, as it boils down to a notational adaption of the original proof. Let us just say that essentially $P_\alpha$ needs to be replaced by $\tc(\sigma\cup\tau)$, $P_{\alpha+1}$ needs to be replaced by $\QQ$ and $\pi_{\alpha+1}$ by $\pi$ when adapting the proof.
\end{proof} 

\begin{corollary}\label{approachableft}
  If $\MM\models\GB^-$ and $\PP$ is a notion of class forcing for $\MM$ that is approachable by projections, then $\PP$ satisfies the forcing theorem over $\MM$.
\end{corollary}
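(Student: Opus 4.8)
The plan is to deduce Corollary \ref{approachableft} from Lemma \ref{projectionpreservesforcing} in exactly the same way that the forcing theorem for the set reduction property was deduced, namely by reducing everything to the definability lemma for the single atomic formula $\anf{v_0\subseteq v_1}$ (equivalently $\anf{v_0=v_1}$), which suffices by \cite[Theorem 4.3]{ClassForcing}. So first I would fix $\PP$-names $\sigma,\tau\in M^\PP$ and set $X=\tc(\sigma\cup\tau)$, which is a subset of $\PP$ lying in $M$. Applying the definition of approachability by projections to $X$, we obtain a set $y$, a set-sized subforcing $\QQ$ of $\PP$ containing $X\cup\{\one_\PP\}$, and a projection $\pi=\pi_{X,y}\colon\PP\to\QQ$ respecting $X$; moreover, since $\Pi\in\C$, the assignment $\langle\sigma,\tau\rangle\mapsto\langle y,\QQ,\pi\rangle$ is itself definable over $\MM$ (one can, e.g., use a fixed $\C$-definable choice among the witnesses, or simply quantify over all witnessing triples).

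Next I would invoke Lemma \ref{projectionpreservesforcing}: since $\pi$ respects $\tc(\sigma\cup\tau)=X$, we have for every $p\in\PP$ that $p\Vdash_\PP\sigma\subseteq\tau$ if and only if $\pi(p)\Vdash_\QQ\sigma\subseteq\tau$. Now $\QQ$ is a notion of set forcing, so it satisfies the forcing theorem over $\MM$ (this is the classical set forcing theorem relativised to $M$), and in particular the relation $\{\langle q,\sigma,\tau\rangle\mid q\in\QQ,\ q\Vdash_\QQ\sigma\subseteq\tau\}$ is definable over $\langle M,\in\rangle$ with the parameter $\QQ$. Composing this with the $\C$-definable map $\langle\sigma,\tau\rangle\mapsto\pi_{X,y}$ and evaluating at $p$, we conclude that
\[
\{\langle p,\sigma,\tau\rangle\in P\times(M^\PP)^2\mid p\Vdash_\PP\sigma\subseteq\tau\}
\]
is definable over $\langle M,\C,\PP,\le_\PP\rangle$, hence is an element of $\C$ by first-order class comprehension. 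This establishes the definability lemma for $\anf{v_0\subseteq v_1}$, and therefore, by \cite[Theorem 4.3]{ClassForcing}, $\PP$ satisfies the forcing theorem for every $\L_\in$-formula with class name parameters over $\MM$.

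The only genuinely delicate point I anticipate is making precise that the passage from $\langle\sigma,\tau\rangle$ to the witnessing projection is uniformly definable: the definition of approachability by projections fixes a single class $\Pi\in\C$ whose sections $\pi_{X,y}$ are candidate projections, but it only asserts the \emph{existence}, for each $X$, of some $y$ making $\pi_{X,y}$ a projection respecting $X$ onto a suitable $\QQ$. To get definability of the $\PP$-forcing relation one can either (a) replace ``$\pi(p)\Vdash_\QQ\sigma\subseteq\tau$'' by ``for all $y$, if $\pi_{X,y}$ is a projection respecting $X$ onto some set-sized $\QQ\supseteq X\cup\{\one_\PP\}$, then $\pi_{X,y}(p)\Vdash_\QQ\sigma\subseteq\tau$'', noting that by Lemma \ref{projectionpreservesforcing} every such $y$ gives the same answer and at least one exists, so this is a correct and $\C$-definable rephrasing; or (b) if $\MM\models\GBC^-$, simply pick the $\le$-least witnessing $y$. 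Everything else is routine: the existence of $\QQ,\pi$ comes directly from the hypothesis, the equivalence from Lemma \ref{projectionpreservesforcing}, and the definability of the set forcing relation for $\QQ$ from the classical forcing theorem.
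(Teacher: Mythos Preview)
Your proposal is correct and follows essentially the same route as the paper: reduce to the definability lemma for an atomic formula via Lemma \ref{projectionpreservesforcing}, use that the set-sized $\QQ$ satisfies the forcing theorem, and invoke \cite[Theorem 4.3]{ClassForcing}. The paper handles your ``delicate point'' by existentially quantifying over the witness $y$ (your option (a) with $\exists$ in place of $\forall$, which works for the same reason), and it phrases the reduction for $\anf{v_0=v_1}$ rather than $\anf{v_0\subseteq v_1}$, but this is immaterial.
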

\begin{proof}
  By Lemma \ref{projectionpreservesforcing}, we can define the forcing relation for equality, namely $p\Vdash_\PP\sigma=\tau$ iff there is a set $y$ and a set-sized subforcing $\QQ$ of $\PP$ such that $\pi=\pi_{\tc(\sigma\cup\tau),y}$ is a projection from $\PP$ to $\QQ$ that respects $\tc(\sigma\cup\tau)$ and such that $\pi(p)\Vdash_\QQ\sigma=\tau$. Note that the latter is definable for $\QQ$ is a set-sized notion of forcing. This suffices by \cite[Theorem 4.3]{ClassForcing}.
\end{proof}

\section{How not to turn proper classes into sets}\label{not turning classes into sets}

In this section, we will provide a collection of sufficient conditions ensuring that no proper class of the ground model turns into a set in a generic class forcing extension. This will be contrasted in the next section, where we provide a notion of class forcing that actually does turn a proper class into a set. A central notion in this context will be that of bounded and unbounded names.

\begin{definition}
  If $\MM\models\GB^-$ and $\PP$ is a notion of class forcing for $\MM$, then we call a $\PP$-name $\sigma$ \emph{bounded} if there is $A\in M$ such that $$\{\sigma^G\mid G\textrm{ is }\PP\textrm{-generic over }\MM\}\cap M\subseteq A.$$
We say that $\sigma$ is an \emph{unbounded} name otherwise.
We say that $\PP$ \emph{has bounded names} (over $\MM$) if there is no unbounded $\PP$-name $\sigma\in M$.
\end{definition}

This property is self-strengthening in the following sense.

\begin{lemma}
  Assume that $\MM\models\GB^-$ and $\PP$ is a notion of class forcing for $\MM$ that has bounded names. Assume further that $\langle\sigma_i\mid i\in I\rangle\in M$ is a sequence of $\PP$-names. Then there is a sequence $\langle A_i\mid i\in I\rangle$ such that $$\{\sigma_i^G\mid G\textrm{ is }\PP\textrm{-generic over }\MM\}\cap M\subseteq A_i$$ for every $i\in I$.
\end{lemma}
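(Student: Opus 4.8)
The plan is to regard \emph{uniformity} as the only genuine content of the lemma: by hypothesis each single name $\sigma_i$ is bounded, so a witness $A_i$ exists for each $i$ in isolation, and the point is to assemble these into one sequence that lies in $M$. The tool for this is the Collection scheme of $\GB^-$, applied with a class parameter that encodes, uniformly in $i$, which sets of $\MM$ can occur as an evaluation $\sigma_i^G$. Producing that class parameter is the step where I would use that $\PP$ satisfies the forcing theorem over $\MM$; I comment on this below.

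Granting the forcing theorem for $\PP$, the definability lemma holds for $\anf{v_0=v_1}$ and hence, by \cite[Theorem~4.3]{ClassForcing}, for $\anf{v_0\neq v_1}$ as well, so that
\[
E=\set{\langle p,\tau,\rho\rangle\in P\times M^\PP\times M^\PP}{p\Vdash_\PP^\MM\tau\neq\rho}
\]
is a class of $\MM$. Since $\langle\sigma_i\mid i\in I\rangle\in M$ is a set, hence a class of $\MM$, and $x\mapsto\check x$ is recursively definable over $\langle M,\in\rangle$, first-order class comprehension gives that
\[
R=\set{\langle i,x\rangle\in I\times M}{\langle\one_\PP,\sigma_i,\check x\rangle\notin E}
\]
is a class of $\MM$. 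I would then compute the sections $R_i=\set{x}{\langle i,x\rangle\in R}$ directly from the definition of the forcing relation: since $\MM$ is countable, every condition of $\PP$ belongs to some $\PP$-generic filter over $\MM$, so $\langle\one_\PP,\sigma_i,\check x\rangle\notin E$ --- that is, $\one_\PP\not\Vdash_\PP^\MM\sigma_i\neq\check x$ --- holds if and only if there is a $\PP$-generic filter $G$ over $\MM$ with $\sigma_i^G=\check x^G=x$; hence
\[
R_i=\set{\sigma_i^G}{G\text{ is }\PP\text{-generic over }\MM}\cap M
\]
for every $i\in I$. Since $\PP$ has bounded names, each $R_i$ is contained in some set of $\MM$; being a class of $\MM$ (a section of $R$ with the set parameter $i$) that is a subclass of a set, $R_i$ is therefore an element of $M$, because in $\GB^-$ every class which is a subclass of a set is a set, by Separation with a class parameter. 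Finally, for each $i\in I$ there is a set $y$ with $y=R_i$, so Collection with the class parameter $R$ yields a set $S\in M$ such that $R_i\in S$ for all $i\in I$; consequently $\langle A_i\mid i\in I\rangle:=\langle R_i\mid i\in I\rangle=\set{\langle i,y\rangle\in I\times S}{y=R_i}$ is a set of $\MM$ by Separation with the parameter $R$, and by the identification of the sections it satisfies $\set{\sigma_i^G}{G\text{ is }\PP\text{-generic over }\MM}\cap M=R_i\subseteq A_i$ for every $i\in I$, as required.

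The step I expect to be the main obstacle is the first one: that the forcing relation for equality is definable over $\MM$, equivalently that $\PP$ satisfies the forcing theorem --- this is precisely what makes $R$ a genuine class of $\MM$ on which Collection can act. Only the definability lemma for a single atomic formula is used, and not the truth lemma. The remaining ingredients are routine and I would simply verify them: that $\check x^G=x$ for every generic $G$, that over a countable model every condition extends to a generic filter (so the syntactic description of $R_i$ really coincides with the semantic set $\set{\sigma_i^G}{G}\cap M$), and that the invocations of the Separation and Collection schemes with a class parameter are legitimate in $\GB^-$.
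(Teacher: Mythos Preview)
Your argument is fine \emph{modulo} the forcing theorem, but that is precisely the problem: the lemma assumes only that $\PP$ has bounded names, and neither this paper nor anything it cites establishes that bounded names implies the definability lemma for $\PP$. Without it, your class $R$ need not lie in $\C$, and the appeal to Collection with class parameter $R$ is illegitimate. So the obstacle you flag yourself is a genuine and fatal gap for this approach as it stands.

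The paper sidesteps definability altogether. Instead of uniformizing inside $\MM$---which is what forces you to express ``$x$ is a possible evaluation of $\sigma_i$'' as an $\MM$-class and hence to invoke the forcing relation---it packages the entire sequence into a single $\PP$-name
\[
\sigma=\{\langle\op(\tau_i^j,\check i),\one_\PP\rangle\mid i\in I,\ j\in J_i\},
\]
where $\sigma_i=\{\langle\tau_i^j,p_i^j\rangle\mid j\in J_i\}$, and applies the bounded-names hypothesis \emph{once}, externally in $\V$, to this $\sigma$. That yields a single bound $A\in M$, and the individual $A_i$ are then read off as the $i$-sections $A_i=\{a\mid\langle a,i\rangle\in A\}$, so the sequence $\langle A_i\mid i\in I\rangle$ lies in $M$ by Separation. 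The point is that the existential witness $A$ is produced by one external application of the hypothesis, so nothing about the forcing relation ever needs to be a class of $\MM$.
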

\begin{proof}
  Let $\langle\sigma_i\mid i\in I\rangle\in M$ be a sequence of $\PP$-names, such that $\sigma_i=\{\langle\tau_i^j,p_i^j\rangle\mid j\in J_i\}$ for every $i\in I$. Let $\sigma=\{\langle\op(\tau_i^j,\check i),\one_\PP\rangle\mid i\in I,j\in J_i\}$. Since $\sigma$ is a bounded name by assumption, we may find $A\in M$ such that $\{\sigma^G\mid G\textrm{ is }\PP\textrm{-generic over }\MM\}\cap M\subseteq A$. For every $i\in I$, let $A_i=\{a\mid\langle a,i\rangle\in A\}$. Then $\langle A_i\mid i\in I\rangle\in M$ is easily seen to be as desired.
\end{proof}

\begin{lemma}\label{boundednamescs}
  If $\MM\models\GB^-$ and $\PP$ is a notion of class forcing for $\MM$ that has bounded names, then no proper class $X$ of $\MM$ is turned into a set by forcing with $\PP$.
\end{lemma}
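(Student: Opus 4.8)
The plan is to argue by contraposition: suppose some proper class $X \in \C \setminus M$ of $\MM$ becomes a set in a $\PP$-generic extension, witnessed by a $\PP$-name $\sigma \in M^\PP$ and a $\PP$-generic filter $G$ over $\MM$ with $\sigma^G = X$. I want to manufacture from $\sigma$ an unbounded $\PP$-name, contradicting that $\PP$ has bounded names. The natural candidate is a name $\rho$ whose generic evaluations range over (arbitrarily large pieces of) $X$; since $X$ is proper, no single $A \in M$ can contain all of them.

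First I would fix a definable enumeration-type setup: since $X$ is a proper class, for every $A \in M$ there is some element of $X$ outside $A$. The key point is that $X = \sigma^G$ is built from pairs $\langle \tau, p \rangle \in \sigma$ with $p \in G$, so every element $x \in X$ is of the form $\tau^G$ for some $\tau$ with $\langle \tau, p \rangle \in \sigma$, $p \in G$. To get a single name whose generic evaluations cohere into $X$, I would consider the name $\rho = \{\langle \tau, p\rangle \mid \langle \tau, p \rangle \in \sigma\}$ — that is just $\sigma$ again, and indeed I claim $\sigma$ itself is already unbounded. Concretely: suppose toward a contradiction that $\sigma$ is bounded, so there is $A \in M$ with $\{\sigma^H \mid H \text{ is }\PP\text{-generic over }\MM\} \cap M \subseteq A$. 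Now $\sigma^G = X \notin M$ since $X$ is proper, so $X$ does not itself fall under this bound — but that is not immediately a contradiction, because the bound only constrains those $\sigma^H$ that happen to lie in $M$. So I need a finer name.

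The fix is to pass to a name for a \emph{set approximation} of $X$. Fix an arbitrary $A \in M$; since $X$ is proper there is $y \in X \setminus A$, and $y = \tau^G$ for some $\langle \tau, p \rangle \in \sigma$ with $p \in G$. The plan is to build, for each "guess" at a bounding set, a name that escapes it. More cleanly: define $\rho = \{\langle \op(\tau, \check\emptyset), \one_\PP\rangle \mid \exists p\,(\langle \tau, p\rangle \in \sigma)\}$ — or better, use a name $\pi = \{\langle \tau, p \rangle \mid \langle \tau, p\rangle \in \sigma\} \cup \{\langle \check a, \one_\PP \rangle \mid a \in M\}$... but the latter is not a set. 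The genuinely correct move, which I would carry out in detail, is: for a condition $q \in G$ forcing enough of the situation, and using that $\sigma^G \notin M$, observe that the set $T = \{\tau \mid \exists p\,(\langle \tau, p\rangle \in \sigma)\} \in M$, and consider the name $\mu = \{\langle \op(\tau, \check{r}), r \rangle \mid \langle \tau, r \rangle \in \sigma\}$ recording, alongside each potential element, the condition that put it in; then $\mu^H \in M$ would, together with $H \cap \{r : \langle \tau, r\rangle \in \sigma\}$, recover $\sigma^H$, so boundedness of $\mu$ forces boundedness of the family $\{\sigma^H : H\}$. Since $\sigma^G = X$ is proper while any $A \in M$ is a set, picking the $\PP$-generic $G$ with $\sigma^G = X$ shows no $A \in M$ bounds $\{\mu^H \mid H \text{ generic}\} \cap M$: indeed, unravelling, for each $A$ there is a generic $H$ (namely $G$ restricted appropriately, or $G$ itself) with $\mu^H$ coding elements of $X$ outside $A$, and — crucially — one arranges $\mu^H \in M$ by absorbing the relevant generic information into the name. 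The main obstacle is precisely this last bookkeeping: producing a name whose \emph{evaluations land in $M$} (so that the hypothesis "$\PP$ has bounded names" bites) yet which \emph{cannot be uniformly bounded} because it witnesses the properness of $X$. I would resolve it by the standard trick used already in the preceding lemma of this section — pairing each $\tau$ with $\check r$ for the witnessing condition $r$, so that each generic evaluation is a set in $M$ coding a $G$-dependent subfamily of $X$, and then noting that as $A$ ranges over $M$ these sets are cofinal in the proper class $X$, hence unbounded.
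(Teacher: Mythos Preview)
Your proposal has a genuine gap: none of the candidate names you construct actually works, and the sketch at the end does not close the hole you yourself identified.

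You correctly diagnose the obstacle: the bound on $\sigma$ itself is useless because $\sigma^G=X\notin M$, so the hypothesis only constrains evaluations lying in $M$. But your attempts to repair this all fail for the same reason. Take your final candidate $\mu=\{\langle\op(\tau,\check r),r\rangle\mid\langle\tau,r\rangle\in\sigma\}$. Then $\mu^G=\{\langle\tau^G,r\rangle\mid\langle\tau,r\rangle\in\sigma,\ r\in G\}$, and from this set one can recover $\sigma^G=X$ by projection. Since $X\notin M$, we get $\mu^G\notin M$ as well, so once again the bounded-names hypothesis has nothing to say about the one evaluation you care about. Your closing remark about ``$G$-dependent subfamilies of $X$'' being cofinal does not fix this: you never exhibit a family of generics $H$ with $\mu^H\in M$ whose evaluations escape every $A\in M$, and there is no reason to expect one.

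The paper's argument avoids this trap by turning the problem inside out. Instead of seeking a single unbounded name, it applies boundedness to the \emph{constituent} names: write $\sigma=\{\langle\tau_i,p_i\rangle\mid i\in I\}$ and use the self-strengthening lemma (the lemma immediately preceding this one) to obtain a sequence $\langle A_i\mid i\in I\rangle\in M$ with $\tau_i^H\in A_i$ whenever $\tau_i^H\in M$. Now for the specific $G$ with $\sigma^G=X$, every $\tau_i^G$ with $p_i\in G$ is an element of $X\subseteq M$, hence lies in $A_i$. Thus $X\subseteq\bigcup_{i\in I}A_i\in M$, and Separation for the predicate $X$ gives $X\in M$, a contradiction. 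The ``standard trick from the preceding lemma'' you allude to is exactly this uniformization of bounds over the $\tau_i$, not a device for cooking up a single name.
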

\begin{proof}
  Assume for a contradiction that $\sigma\in M$ is such that $\sigma^G=X$ for some proper class $X$ of $\MM$ and some $\PP$-generic filter $G$ over $\MM$. Write $\sigma$ as $\sigma=\{\langle\tau_i,p_i\rangle\mid i\in I\}$. By the bounded names property, we find $\langle A_i\mid i\in I\rangle$ such that for every $i\in I$ and every $\PP$-generic filter $H$ over $\MM$, $\tau_i^H\in A_i$. But this implies that $X=\sigma^G\subseteq\bigcup_{i\in I}A_i\in M$ for every $\PP$-generic filter $G$ over $\MM$. Using separation for the predicate $X$ in $\MM$, we obtain that $X\in M$, contradicting that $X$ is a proper class of $\MM$.
\end{proof}

\begin{lemma}
  If $\MM\models\GB$ and $\PP$ is a notion of class forcing for $\MM$, then $\PP$ has bounded names.
\end{lemma}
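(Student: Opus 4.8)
The plan is to use the power set axiom of $M$ to obtain, for a fixed name $\sigma\in M$, a uniform bound on the rank of $\sigma^G$ across all generics $G$, and then to take the witnessing set $A$ to be the corresponding level of the von Neumann hierarchy of $M$.

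First I would record the standard rank estimate: for every $\PP$-name $\sigma\in M^\PP$ and every filter $G$ that is $\PP$-generic over $\MM$, one has $\rank(\sigma^G)\le\rank(\sigma)$, where $\rank(\sigma)$ on the right denotes the name rank of $\sigma$. This is proved by an easy induction on name rank: since $\sigma^G=\{\tau^G\mid\exists p\in G\,[\langle\tau,p\rangle\in\sigma]\}$, we get $\rank(\sigma^G)=\sup\{\rank(\tau^G)+1\mid\tau^G\in\sigma^G\}\le\sup\{\rank(\tau)+1\mid\exists p\in\PP\,[\langle\tau,p\rangle\in\sigma]\}=\rank(\sigma)$, applying the inductive hypothesis to each $\tau$ appearing in $\sigma$.

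Now fix a $\PP$-name $\sigma\in M$ and put $\alpha=\rank(\sigma)\in\On^M$. Since $\MM\models\GB$ includes the power set axiom, $M\models\ZF$, so the level $V^M_{\alpha+1}$ of the von Neumann hierarchy of $M$ is a set of $M$. Let $G$ be any filter that is $\PP$-generic over $\MM$ with $\sigma^G\in M$. By the rank estimate $\rank(\sigma^G)\le\alpha$, and since the rank function is absolute between the transitive models $M$ and $\V$, also $\sigma^G$ has rank $\le\alpha$ as computed in $M$; hence $\sigma^G\in V^M_{\alpha+1}$. Therefore $A:=V^M_{\alpha+1}$ witnesses that $\sigma$ is bounded, and as $\sigma$ was an arbitrary $\PP$-name in $M$, $\PP$ has bounded names.

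I do not expect any genuine obstacle here; the only point worth emphasizing is that this argument is precisely where the full power set axiom (rather than merely $\GB^-$) is used, namely to guarantee that $V^M_{\alpha+1}$ is a set of the model rather than a proper class. This is exactly the feature that will be missing in the models of $\GB^-$ without power set considered in the following section, where a $\PP$-name can fail to be bounded.
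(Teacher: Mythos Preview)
Your proof is correct and follows essentially the same approach as the paper: bound the rank of $\sigma^G$ by the (name) rank of $\sigma$, then use the power set axiom in $\MM$ to take the corresponding level of the von Neumann hierarchy as the bounding set $A$. You give more detail (the explicit induction for the rank estimate and the remark on absoluteness), but the argument is the same.
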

\begin{proof}
  Let $\sigma$ be a $\PP$-name in $M$, of rank less than $\alpha$. By a standard argument (in $\V$), we know that the rank of $\sigma^G$ is less than $\alpha$ for any $\PP$-generic filter $G$ over $\MM$. Hence if $\sigma^G\in M$, then $\sigma^G\in M_\alpha$, and $M_\alpha\in M$ by the power set axiom in $\MM$, hence $\sigma$ is a bounded name.
\end{proof}

\begin{lemma}
  If $\MM\models\GB^-$ and $\PP\in\MM$ is a notion of set forcing, then $\PP$ has bounded names.
\end{lemma}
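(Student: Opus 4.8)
If $\MM\models\GB^-$ and $\PP\in\MM$ is a notion of set forcing, then $\PP$ has bounded names.

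The plan is to reduce the claim to the forcing theorem for set forcing, which we already invoked in the proof of Lemma~\ref{srp1}, together with Replacement in $\MM$. Fix a $\PP$-name $\sigma\in M^\PP$. I claim that
\[
A:=\{a\in M\mid\exists p\in\PP\ (p\Vdash_\PP\sigma=\check a)\}
\]
is a set of $\MM$ and witnesses that $\sigma$ is bounded. For the witnessing part, suppose $G$ is $\PP$-generic over $\MM$ with $\sigma^G\in M$, say $\sigma^G=a$. Then, since $\check a^G=a$, we have $\langle M[G],\in\rangle\models\sigma^G=\check a^G$, so by the truth lemma for the set forcing $\PP$ there is $p\in G$ with $p\Vdash_\PP\sigma=\check a$; thus $a\in A$. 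Hence $\{\sigma^G\mid G\text{ is }\PP\text{-generic over }\MM\}\cap M\subseteq A$.

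It remains to see that $A$ is a set of $\MM$. By the definability lemma for the set forcing $\PP$, the relation $R=\{\langle p,a\rangle\in\PP\times M\mid p\Vdash_\PP\sigma=\check a\}$ is a class of $\MM$, being definable over $\MM$ from the (definable) forcing relation for equality and the (definable) class map $a\mapsto\check a$. Its domain $R'=\{p\in\PP\mid\exists a\ \langle p,a\rangle\in R\}$ is a subset of the set $\PP$, hence a set of $\MM$, by Separation with the class parameter $R$. Moreover $R$ is the graph of a class function on $R'$: since $\MM$ is countable, every $p\in\PP$ lies in some $\PP$-generic filter $G$ over $\MM$, and if $p\Vdash_\PP\sigma=\check a$ and $p\Vdash_\PP\sigma=\check b$, then $a=\check a^G=\sigma^G=\check b^G=b$. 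By Replacement in $\MM$, the image $A=\{a\mid\exists p\in R'\ \langle p,a\rangle\in R\}$ is a set of $\MM$, as desired. Since $\sigma\in M^\PP$ was arbitrary, $\PP$ has bounded names.

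The argument is otherwise routine; the one essential ingredient is the use of the forcing theorem for $\PP$, which is available precisely because $\PP$ is a notion of \emph{set} forcing, and this is exactly where the hypothesis $\PP\in\MM$ enters. Note that the naive rank bound $\rank(\sigma^G)\le\rank(\sigma)$ does not suffice on its own, since $\{x\in M\mid\rank(x)<\rank(\sigma)\}$ need not be a set of $\MM$ in the absence of the power set axiom; it is the boundedness of the poset $\PP$, fed through the forcing relation, that produces the bounding set.
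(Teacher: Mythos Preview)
Your proof is correct and follows essentially the same approach as the paper: use the truth lemma for the set forcing $\PP$ to get a condition witnessing each possible value, then invoke Replacement (Collection) in $\MM$ over the set $\PP$ to collect these values into a set. The paper's proof is a two-line sketch of exactly this argument; you have simply spelled out the details (in particular the functionality of $R$ and why $A\in M$), and added a helpful closing remark on why the naive rank bound is insufficient in $\GB^-$.
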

\begin{proof}
  Let $\sigma$ be a $\PP$-name in $M$. Whenever $\sigma^G=z\in M$ for some $\PP$-generic filter $G$ over $\MM$, then there is $p\in\PP$ forcing that $\sigma=\check z$. Hence $\sigma$ is a bounded name by replacement in $\MM$.
\end{proof}

Next we consider approachability by projections.

\begin{lemma}\label{approachablebn}
  If $\MM\models\GB^-$ and $\PP$ is a notion of class forcing for $\MM$ that is approachable by projections, then $\PP$ has bounded names.
\end{lemma}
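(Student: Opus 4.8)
The plan is to reduce the bounded names property to the behavior of a single projection on the transitive closure of a given name, exactly in the spirit of the proof of Corollary~\ref{approachableft}. Fix a $\PP$-name $\sigma\in M$, and let $X=\tc(\sigma)$, which is a subset of $\PP$ lying in $M$. By approachability by projections, there is a set $y$ and a set-sized subforcing $\QQ$ of $\PP$ containing $X\cup\{\one_\PP\}$ such that $\pi=\pi_{X,y}\colon\PP\to\QQ$ is a projection respecting $X$. The key point is that $\QQ$ is a notion of set forcing, and I want to transfer the evaluation of $\sigma$ along $\pi$.

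First I would establish that for every $\PP$-generic filter $G$ over $\MM$, $\sigma^G=\sigma^{\pi[G]}$, or more precisely that $\sigma^G$ is computed by the $\QQ$-generic filter $\bar G$ generated by $\pi[G]$ (note that property~(3) of a projection guarantees $\pi[G]$ generates a $\QQ$-generic filter over $M$, and that $\QQ$ is set-sized so genericity over $M$ suffices). The natural way to do this is by induction on name rank, mirroring Lemma~\ref{lemma:reduced names}: since $\pi$ respects $X=\tc(\sigma)$, every condition appearing in $\sigma$ lies in $\QQ$ and is fixed by $\pi$, and property~(4) ($\pi(q)\leq_\PP p\to q\leq_\PP p$ for $p\in X$) together with $\pi$ being the identity on $X$ lets one check that $\langle\tau,p\rangle\in\sigma$ has $p\in G$ if and only if $p\in\bar G$. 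Alternatively, one can invoke Lemma~\ref{projectionpreservesforcing} with $\tau$ ranging over checks of potential elements to pin down $\sigma^G$ as a set definable from $\bar G$ and the (definable, since $\QQ$ is set forcing) $\QQ$-forcing relation. Either way, the upshot is that $\{\sigma^G\mid G\text{ is }\PP\text{-generic over }\MM\}\subseteq\{\sigma^{\bar G}\mid\bar G\text{ is }\QQ\text{-generic over }M\}$, where on the right $\sigma$ is regarded as a $\QQ$-name (which it is, since $\tc(\sigma)\subseteq\QQ$).

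Then I would finish by quoting the already-established fact that set forcing notions in $\MM$ have bounded names: by the Lemma above on set forcing, $\sigma$ as a $\QQ$-name is bounded, so there is $A\in M$ with $\{\sigma^{\bar G}\mid\bar G\text{ is }\QQ\text{-generic over }M\}\cap M\subseteq A$. Combining with the inclusion from the previous paragraph, $\{\sigma^G\mid G\text{ is }\PP\text{-generic over }\MM\}\cap M\subseteq A$, so $\sigma$ is a bounded $\PP$-name. Since $\sigma$ was arbitrary, $\PP$ has bounded names.

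The main obstacle I expect is the careful verification that $\pi[G]$ generates a genuinely $\QQ$-generic filter over $M$ and that $\sigma^G$ really only depends on $G\cap\QQ$ (equivalently on $\bar G$) — this is the part that uses all five clauses of the projection definition and is the analogue of the inductive argument in Lemma~\ref{lemma:reduced names}. Since the paper has already delegated the analogous bookkeeping in Lemma~\ref{projectionpreservesforcing} to the reader, I would likely do the same here, stating the name-rank induction and pointing out that it is a routine adaptation, with the substantive input being that $\pi$ respects $\tc(\sigma)$ and that $\QQ$ is set-sized.
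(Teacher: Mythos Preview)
Your argument is correct, but it takes a different route from the paper. The paper's proof first invokes Corollary~\ref{approachableft} to get that $\PP$ satisfies the forcing theorem; then, given any value $z=\sigma^G\in M$, it picks $p\in\PP$ with $p\Vdash_\PP\sigma=\check z$, applies Lemma~\ref{projectionpreservesforcing} to conclude $\pi(p)\Vdash_\QQ\sigma=\check z$, and finishes by replacement in $\MM$ using that $\QQ$ is set-sized. In other words, the paper transfers the \emph{forcing statement} to $\QQ$, whereas you transfer the \emph{evaluation} of $\sigma$ to $\QQ$ by showing $\sigma^G=\sigma^{\bar G}$ via a name-rank induction and then quote the bounded-names lemma for set forcing.

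Your approach is slightly more elementary: it does not need the forcing theorem for $\PP$ at all, only the concrete combinatorics of a projection respecting $\tc(\sigma)$ (clauses (4) and (5) give $p\in G\leftrightarrow p\in\bar G$ for $p\in\tc(\sigma)$, and clause (3) gives $\QQ$-genericity of $\bar G$). The paper's approach is shorter on the page because it simply chains two already-proved lemmas, but it hides the same work inside Corollary~\ref{approachableft} and Lemma~\ref{projectionpreservesforcing}. Either way the substantive input is identical: everything about $\sigma$ is already visible to the set-sized $\QQ$.
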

\begin{proof}
  Let $\sigma$ be a $\PP$-name in $M$. Using approachability by projections, let $\QQ\supseteq\tc(\sigma)$ be set-sized and let $\pi\colon\PP\to\QQ$ be a projection respecting $\tc(\sigma)$. Since $\PP$ satisfies the forcing theorem by [citation], for any possible value $z$ of $\sigma^G$ for some $\PP$-generic filter $G$ over $\MM$, there is $p\in\PP$ forcing that $\sigma=\check z$. By Lemma \ref{projectionpreservesforcing}, $\pi(p)\Vdash\sigma=\check z$. But $\pi(p)\in\QQ$, i.e.\ any possible value of $\sigma$ is decided by a condition in the set-sized forcing notion $\QQ$, so $\sigma$ is a bounded name by replacement in $\MM$.
\end{proof}

In Section \ref{srp}, we introduced the \emph{set reduction property} and showed that for a notion of class forcing $\PP$, this property is equivalent to the property that every new set added by $\PP$ is in fact added by a set-sized complete subforcing of $\PP$. We now show that this property ensures that no proper class is turned into a set.

\begin{lemma}
  If $\MM\models\GB^-$ and $\PP$ is a notion of class forcing for $\MM$ such that every new set is added by a set-sized complete subforcing of $\PP$, then no proper class $X$ of $\MM$ is turned into a set by forcing with $\PP$. 
\end{lemma}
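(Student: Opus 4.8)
The plan is to reduce this statement to the already-established fact (Lemma \ref{boundednamescs}) that a notion of class forcing with bounded names cannot turn a proper class into a set. So the real task is to show that if every new set added by $\PP$ is added by a set-sized complete subforcing of $\PP$, then $\PP$ has bounded names. Equivalently, by Lemma \ref{srp2}, I may assume $\PP$ has the set reduction property.

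First I would fix a $\PP$-name $\sigma\in M$ and aim to produce $A\in M$ bounding all values $\sigma^G\in M$. The natural move is to consider $A_0=\tc(\sigma)$, apply the set reduction property to $A_0$ and $\one_\PP$ to obtain $q\leq_\PP\one_\PP$ and $\QQ\setel\PP$ in $M$ with $(\ast)(A_0,q,\QQ)$ holding. By Lemma \ref{lemma:reduced names}, for every $\PP$-generic $G$ with $q\in G$ we have $\sigma^G=(\sigma_q^\QQ)^{\bar G}$, where $\bar G=G\cap\QQ$ and $\sigma_q^\QQ$ is a $\QQ$-name in $M$. Since $\QQ$ is a set-sized notion of forcing, it has bounded names by the earlier lemma on set forcing, so there is $B_q\in M$ with all values $(\sigma_q^\QQ)^{\bar H}\in M$ lying in $B_q$; in particular $\sigma^G\in B_q$ for every $\PP$-generic $G$ with $q\in G$ and $\sigma^G\in M$. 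The issue is that this only handles generics passing through this single $q$.

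To get a bound covering all generics, I would use the density, guaranteed by the set reduction property, of the class $D=\{q\leq_\PP\one_\PP\mid\exists\QQ\setel\PP\ (\ast)(A_0,q,\QQ)\}$ together with first-order class comprehension to pick, for each such $q$, a witnessing $\QQ$ and hence a bounding set $B_q$. The obstacle is that $D$ is a proper class and $\MM\models\GB^-$ only, so I cannot in general take the union $\bigcup_{q\in D}B_q$ inside $M$. The remedy is to avoid needing the union: I would only need a \emph{single} bound, so instead argue by contradiction. Suppose $\sigma$ is unbounded. Then for every $A\in M$ there is a $\PP$-generic $G$ with $\sigma^G\in M\setminus A$. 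Combining this with the density of $D$, I can, given any condition $p$, find $q\leq_\PP p$ in $D$; but by the previous paragraph $\sigma^G\in B_q$ for every generic through $q$, so $B_q$ would bound $\sigma$ \emph{below} $p$. Iterating a back-and-forth construction as in the proof of Lemma \ref{srp2} — enumerating the dense classes $\langle D_n\mid n\in\omega\rangle$ of $\PP$ and building a descending generic sequence $\langle q_n\mid n\in\omega\rangle$ through $D$ — one produces a single generic $G$ meeting each $D_n$ with $q_n\in D$, and then $\sigma^G\in B_{q_0}$, contradicting that $B_{q_0}$ fails to bound $\sigma$.

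Actually the cleanest route avoids even the back-and-forth: since $\MM$ is countable there \emph{is} a $\PP$-generic filter $G$ over $\MM$, and any such $G$ meets the dense class $D$, say $q\in G\cap D$; then $\sigma^G=(\sigma_q^\QQ)^{\bar G}$ by Lemma \ref{lemma:reduced names}, and $\sigma^G\in M$ forces $(\sigma_q^\QQ)^{\bar G}\in M$, which by boundedness of the set forcing $\QQ$ means it was decided by a condition in $\QQ$, hence $\sigma=\check z$ is forced by some $q'\in\QQ\cap\bar G$. The set of all $z$ for which some condition in $\QQ$ forces $\sigma=\check z$ — over all $q\in D$ and witnessing $\QQ$ — is the candidate for $A$; but again collecting these across the proper class $D$ is the bad part. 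So the argument genuinely has to run by contradiction through the countability of $\MM$: if $\sigma$ were unbounded, one constructs a specific generic $G$ with $\sigma^G\in M$ escaping every $B_q$, yet the $q\in G\cap D$ forces $\sigma^G\in B_q$. I would present it in that contradiction form. The main obstacle, to flag explicitly, is precisely the failure of replacement/union across the proper class $D$ under $\GB^-$, which is why the proof must be packaged as producing one offending generic rather than directly exhibiting the bounding set.

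\begin{proof}
  By Lemma \ref{srp2}, the hypothesis is equivalent to $\PP$ having the set reduction property. By Lemma \ref{boundednamescs}, it suffices to show that $\PP$ has bounded names. Suppose not, and let $\sigma\in M$ be an unbounded $\PP$-name. Let $A_0=\tc(\sigma)$ and let
  $$D=\{q\in\PP\mid\exists\QQ\setel\PP\ (\ast)(A_0,q,\QQ)\}.$$
  By first-order class comprehension $D\in\C$, and by the set reduction property $D$ is dense in $\PP$.

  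Enumerate (in $\V$) all dense subclasses of $\PP$ in $\C$ as $\langle D_n\mid n\in\omega\rangle$, and enumerate all subsets of $A_0$ which are elements of $M$ as $\langle x_n\mid n\in\omega\rangle$. We build a descending sequence $\langle q_n\mid n\in\omega\rangle$ in $\PP$. Let $q_{-1}=\one_\PP$. Given $q_{n-1}$, using density of $D$ pick $q_n'\leq_\PP q_{n-1}$ with $q_n'\in D$, then pick $q_n\leq_\PP q_n'$ with $q_n\in D_n$. Since $\MM$ is countable, $G=\{q\in\PP\mid\exists n\ q_n\leq_\PP q\}$ is $\PP$-generic over $\MM$, and $G$ meets $D$; fix $q\in G\cap D$ and $\QQ\setel\PP$ in $M$ with $(\ast)(A_0,q,\QQ)$.

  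Since $\tc(\sigma)=A_0$, Lemma \ref{lemma:reduced names} gives $\sigma^G=(\sigma_q^\QQ)^{\bar G}$, where $\bar G=G\cap\QQ$ and $\sigma_q^\QQ\in M$ is a $\QQ$-name. As $\QQ$ is a notion of set forcing, it has bounded names, so there is $B_q\in M$ with $(\sigma_q^\QQ)^{\bar H}\in B_q$ for every $\QQ$-generic $\bar H$ over $\MM$ with $(\sigma_q^\QQ)^{\bar H}\in M$. In particular, if $\sigma^G\in M$ then $\sigma^G\in B_q$.

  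It remains to see that this yields a genuine bound on $\sigma$, contradicting unboundedness. Repeat the construction above but, having fixed the first step, vary it: for an arbitrary target $x_m\in M$ we may, by unboundedness, choose the sequence so that in addition $\sigma^G\in M\setminus x_m$. Choosing $x_m=B_q$ for the $q\in G\cap D$ produced along the way is impossible, since then simultaneously $\sigma^G\in B_q$ and $\sigma^G\notin B_q$. More precisely: since $\sigma$ is unbounded, for every $A\in M$ there is a $\PP$-generic filter $G$ over $\MM$ with $\sigma^G\in M\setminus A$; but every $\PP$-generic $G$ meets the dense class $D$ in some $q$, and then $\sigma^G\in M$ forces $\sigma^G\in B_q\in M$. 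Taking $A$ to be any fixed $B_q$ arising this way gives the contradiction. Hence $\PP$ has bounded names, and by Lemma \ref{boundednamescs} no proper class of $\MM$ is turned into a set by forcing with $\PP$.
\end{proof}
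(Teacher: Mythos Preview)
Your approach has a genuine gap: you attempt to show that $\PP$ has bounded names, but this is \emph{false} under the given hypothesis. The paper itself provides the counterexample. The forcing $\PP$ of Definition~\ref{IntroP} (partial functions $\omega\to 2$, ordered by reverse inclusion) does not add new sets, hence vacuously satisfies ``every new set is added by a set-sized complete subforcing of $\PP$,'' yet Lemma~\ref{example for an unbounded name} and the Remark following it exhibit an unbounded $\PP$-name over a model $\MM\models\GB^-$ in which every set is countable. So the reduction to Lemma~\ref{boundednamescs} cannot succeed. The circularity you sensed in your final paragraph is real: for each generic $G$ you obtain a bound $B_q$ depending on some $q\in G\cap D$, but there is no single $B\in M$ that works for all generics, and you cannot ``take $A$ to be $B_q$'' before $G$ (and hence $q$) has been constructed.

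The paper's proof avoids bounded names entirely and is much more direct. It works with the specific class $X$: if some proper class $X$ of $\MM$ appears as $\sigma^G$ in a $\PP$-extension, then $X\in M[G]\setminus M$ is a new set, so by hypothesis $X\in M[\bar G]$ for some $\QQ\setel\PP$, where $\bar G=G\cap\QQ$. Thus $X$ has a $\QQ$-name $\sigma'\in M$. For each $\langle\tau,p\rangle\in\sigma'$, the possible ground-model values of $\tau$ under $\QQ$-generics form a set in $M$ (since $\QQ$ is set-sized and satisfies the forcing theorem), and the union over the set $\sigma'$ gives $A\in M$ with $X\subseteq A$. Separation with class parameter $X$ then yields $X\in M$, a contradiction. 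The point is that one only needs to cover the elements of the single class $X$ by a set, not to bound an arbitrary $\PP$-name.
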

\begin{proof}
  If $X$ were a new set in some $\PP$-generic extension of $\MM$, then it would have a $\QQ$-name $\sigma\in M$ for some set-sized complete subforcing $\QQ$ of $\PP$. Assume $\langle\tau,p\rangle\in\sigma$. Using the forcing theorem, which holds by [citation], any possible value of $\tau$ in a $\QQ$-generic extension is forced by some condition $q\in\QQ$. Using that $\QQ$ is set-sized, we can cover the possible values of $\tau$ by a set in $\MM$. But then we can cover $\sigma^G$ for any $\PP$-generic $G$ over $\MM$ by a single set in $\MM$. Pick $G$ such that $\sigma^G=X$. Using separation for the predicate $X$ implies that $X\in M$, contradicting that $X$ is a proper class of $\MM$.
\end{proof}

\begin{lemma}\label{lemma:pretame pres proper class}
  If $\MM\models\GB^-$ and $\PP$ is a pretame notion of class forcing for $\MM$, then no proper class $X$ of $\MM$ is turned into a set by forcing with $\PP$.
\end{lemma}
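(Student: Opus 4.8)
The plan is to follow the strategy of Lemma~\ref{boundednamescs}: we will cover the elements of the alleged proper-class value by a single ground model set and then apply Separation. Since we are not assuming the bounded names property, the cover will instead be produced by a pretameness argument carried out along a fixed generic filter realizing the proper class. Recall that, being pretame, $\PP$ satisfies the forcing theorem over $\MM$, so we may freely use the definability and truth lemmas.

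Suppose towards a contradiction that $\sigma\in M$ is a $\PP$-name and $G$ is $\PP$-generic over $\MM$ with $X:=\sigma^G\in\C\setminus M$ a proper class of $\MM$. Write $\sigma=\{\langle\tau_i,p_i\rangle\mid i\in I\}$ with $I\in M$ and $\langle\tau_i\mid i\in I\rangle,\langle p_i\mid i\in I\rangle\in M$, and let $\check M=\{\langle\check x,\one_\PP\rangle\mid x\in M\}$, so that $\check M^H=M$ for every $\PP$-generic $H$ over $\MM$. For $i\in I$, set $F_i=\{q\in\PP\mid\exists z\in M\,(q\Vdash_\PP\tau_i=\check z)\}$ and $E_i=F_i\cup\{q\in\PP\mid\forall r\in F_i\,(q\perp_\PP r)\}$. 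As $F_i$ is downward closed, each $E_i$ is dense in $\PP$, and $\langle E_i\mid i\in I\rangle\in\C$ by the forcing theorem and first-order class comprehension. The point of $E_i$, proved via the truth lemma, is: if $q\in E_i$, $H$ is $\PP$-generic with $q\in H$, and $\tau_i^H\in M$, then $q\in F_i$, and hence $q$ forces $\tau_i$ to equal a specific check-name whose value is $\tau_i^H$.

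Now define
\[D=\{q\in\PP\mid\exists\langle A_i\mid i\in I\rangle\in M\ \forall i\in I\ \ q\Vdash_\PP(\tau_i\in\check M\to\tau_i\in\check A_i)\},\]
where the displayed statement is read as $q$ forcing the $\L_\in$-formula $\anf{\forall i\in\check I\,(\mathring\sigma(i)\in\check M\to\mathring\sigma(i)\in\check A(i))}$ with class name parameter $\check M$, for $\mathring\sigma=\{\langle\op(\check i,\tau_i),\one_\PP\rangle\mid i\in I\}$ and $\check A$ the check-name of $\langle A_i\mid i\in I\rangle$. By the forcing theorem and first-order class comprehension, $D\in\C$. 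To see that $D$ is dense below any $p\in\PP$, apply pretameness to $p$ and $\langle E_i\mid i\in I\rangle$ to obtain $q\leq_\PP p$ and $\langle d_i\mid i\in I\rangle\in M$ with each $d_i\subseteq E_i$ predense below $q$; put $A_i=\{z\mid\exists e\in d_i\,(e\Vdash_\PP\tau_i=\check z)\}$, which lies in $M$ by Collection in $\MM$ (with the forcing relation as a class parameter), and use the point about $E_i$ above to check that $\langle A_i\mid i\in I\rangle$ witnesses $q\in D$. Since $D\in\C$ is dense, $G$ meets $D$.

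Pick $q\in G\cap D$ and a corresponding $\langle A_i\mid i\in I\rangle\in M$. Every element of $X=\sigma^G$ is of the form $\tau_i^G$ for some $i\in I$ with $p_i\in G$, and lies in $X\subseteq M$; since $q\in G$ and $\langle A_i\mid i\in I\rangle$ witnesses $q\in D$, it follows that $\tau_i^G\in A_i$. Hence $X\subseteq\bigcup_{i\in I}A_i\in M$, and Separation in $\MM$ applied to the class $X\in\C$ gives $X\in M$, contradicting $X\in\C\setminus M$.

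The step I expect to be the main obstacle is the handling of $D$: expressing the clause ``$\exists\langle A_i\mid i\in I\rangle\in M$'' as a bona fide forcing statement with class name parameters so that $D\in\C$, and ensuring via Collection that the sets $A_i$ extracted from the $d_i$ really lie in $M$. It is also worth emphasizing why we work with a fixed generic $G$ rather than aiming for a uniform bound: pretameness only yields a \emph{dense} class of conditions admitting such a local bound — it does not obviously entail the bounded names property — so the argument must be anchored to a generic realizing $X$.
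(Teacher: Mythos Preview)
Your argument is correct and follows essentially the same strategy as the paper's proof: use pretameness, applied to the family of dense classes indexed by the components $\langle\tau_i,p_i\rangle$ of $\sigma$, to produce a set-sized ground model cover of $\sigma^G$, and then conclude by Separation with the class parameter $X$.

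The main difference is organizational. The paper first invokes the truth lemma (for formulae with the class name parameter $\check X$) to fix a single condition $p\in G$ with $p\Vdash_\PP\sigma=\check X$, and then applies pretameness \emph{once} below $p$ to the family of classes of conditions deciding the $\tau_i$'s. Because $p$ already forces $\sigma\subseteq\check M$, there is no need for your auxiliary classes $E_i$ or for the dense class $D$; the witnesses $\langle d_i\rangle$ from pretameness immediately yield the cover, and the contradiction is reached below $q\leq_\PP p$. Your route instead packages the pretameness step inside the density of $D$ and then meets $D$ generically; this is a bit more laborious (you have to check $D\in\C$ and formalize the forcing statement with $\check M$), but it has the virtue of making explicit the care needed to stay inside $\C$ and to get $\langle A_i\mid i\in I\rangle\in M$ via Collection with the forcing relation as a class parameter. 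Both approaches yield the same cover-and-separate endgame.
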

\begin{proof}
  Assume for a contradiction that $\sigma\in M$ is a $\PP$-name for some proper class $X$ of $\MM$. Note that $\PP$ satisfies the forcing theorem by \cite[Theorem 2.4]{pretameness}, so there is $p\in\PP$ such that $p\Vdash\sigma=\check X$. For $\langle\tau,r\rangle\in\sigma$, let $D_{\langle\tau,r\rangle}=\{d\in\PP\mid d\parallel\tau\}$. Applying pretameness of $\PP$, we may find $q\le p$ and $\langle d_{\langle\tau,r\rangle}\mid\langle\tau,r\rangle\in\sigma\rangle$ such that each $d_{\langle\tau,r\rangle}\subseteq D_{\langle\tau,r\rangle}$ is set-sized and predense below $q$. But this means that $q$ forces a set-sized ground model cover for $\sigma$, contradicting that $q\le p$ forces that $\sigma=\check X$.
\end{proof}

If $\MM\models\GB^-$ and thinks that each of its elements is countable, then there is a notion of class forcing for $\MM$ that is pretame and does not add any new sets, however has an unbounded name:

\begin{definition}\label{IntroP}
  Let $\PP$ denote the forcing whose conditions are (not necessarily finite) partial functions from $\omega$ to $2$, ordered by reverse inclusion -- equivalently, one may consider $\PP$ to be the full support iteration of length $\omega$ of the lottery sum of $\{0,1\}$, ordered naturally.
\end{definition}

\begin{lemma} \label{example for an unbounded name}
There is a $\PP$-name $\sigma$ such that for all $x\subseteq\omega$ in $M$, there is a $\PP$-generic filter $G$ over $M$ with $\sigma^G=x$. 
\end{lemma}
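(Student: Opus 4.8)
The plan is to build the required $\PP$-name $\sigma$ directly from the iteration structure of $\PP$, exploiting the fact that $\PP$ is the full-support length-$\omega$ iteration of lottery sums of $\{0,1\}$: at each coordinate $n$, a generic filter makes a choice between ``$0$'' and ``$1$'', and crucially, since conditions may have arbitrary (possibly infinite) domain, a generic filter can be steered so that \emph{all but finitely many} coordinates take a prescribed value. I would first set up the canonical names $\dot b_n$ for ``the value chosen at coordinate $n$'' (i.e.\ $\dot b_n = \{\langle \check 0, p\rangle \mid p(n)=0\} \cup \{\langle\check 1,p\rangle\mid p(n)=1\}$, or the appropriate variant in the lottery-sum formulation), and then let $\sigma$ be a name for the set $\{n\in\omega \mid \dot b_n = 1 \text{ and } \dot b_m = 0 \text{ for all } m>n\}$ — that is, $\sigma^G$ picks out the largest coordinate on which $G$ chose $1$ if cofinitely many choices were $0$, but I actually want $\sigma^G$ to recover an arbitrary subset of $\omega$, so the right design is different: let $\sigma$ name the set $\{n \mid \dot b_n = 1\}$ intersected against a coding that terminates. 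Let me instead use the cleaner approach below.

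The clean approach: identify a subset $x\subseteq\omega$ in $M$ with its characteristic function, and observe that a condition $p_x$ with domain $\omega\setminus\{0\}$ defined by $p_x(n+1) = x(n)$ leaves only coordinate $0$ undetermined; but this still only codes one set per condition, not via a single name. The actual trick must be: $\sigma$ is a name such that $\sigma^G$ depends on \emph{which} coordinates $G$ leaves looking like a ``marker''. Concretely, I would let $\sigma = \{\langle \check n, p\rangle \mid n\in\omega,\ p\in\PP,\ n+1 \in \dom(p),\ p(n+1) = 1,\ \text{and } p \upharpoonright (\text{some uniform finite data}) \text{ witnesses } n \in \text{dom}\}$ — then given target $x\subseteq\omega$, define the generic filter $G$ to contain the single condition $q_x$ with $\dom(q_x) = \{n+1 \mid n\in\omega\}$ and $q_x(n+1) = x(n)$, together with a choice at coordinate $0$; extend $q_x$ to a generic filter using that $M$ is countable (enumerate the dense classes $\langle D_k \mid k\in\omega\rangle$ and build a decreasing $\omega$-sequence below $q_x$, which is possible because each $D_k$ is dense and conditions of infinite domain remain in $\PP$). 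Then $\sigma^{G} = \{n \mid q_x(n+1) = 1\} = x$, because $q_x\in G$ decides the value at every coordinate $n+1$, and any condition in $G$ compatible with $q_x$ agrees with it there.

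So the key steps, in order, are: (1) fix the explicit name $\sigma$ coding coordinates $n+1$ and verify $\sigma$ is a legitimate $\PP$-name in $M$ (here we use that $M$ thinks each of its elements, in particular each such condition, is an element of $M$; and $\sigma\in M$ since $\PP\in\C$ might be a proper class, so care is needed — actually $\sigma$ as defined is a proper class, which is \emph{not} allowed; so I must instead let $\sigma = \{\langle\check n, p_n\rangle \mid n\in\omega\}$ where $p_n$ is the single condition with $\dom(p_n)=\{n+1\}$ and $p_n(n+1)=1$, giving a genuine set-sized name); (2) given $x\subseteq\omega$ in $M$, define $q_x\in\PP$ with $\dom(q_x)=\{n+1\mid n\in\omega\}$, $q_x(n+1)=x(n)$; (3) using countability of $\MM$, construct a $\PP$-generic filter $G\ni q_x$ by a standard diagonalization against the enumerated dense subclasses, noting that extensions with infinite domain always exist; (4) compute $\sigma^G$: for each $n$, $\check n^G = n\in\sigma^G$ iff $p_n\in G$ iff $p_n$ is compatible with all of $G$, and since $q_x\in G$ and $q_x \parallel p_n \iff q_x(n+1)=1 \iff x(n)=1$, conclude $\sigma^G = x$. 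The main obstacle is the bookkeeping in step (1)/(4): getting the name $\sigma$ to be genuinely set-sized while still having $\sigma^G$ range over \emph{all} subsets of $\omega$, which forces the indirect coding via the fixed conditions $p_n$ and the observation that membership of $p_n$ in $G$ is controlled entirely by the single steering condition $q_x$; once that design is pinned down, steps (2)–(4) are routine, and the genericity construction in step (3) is the usual countable-model argument.
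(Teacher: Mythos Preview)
Your final plan (steps (1)--(4)) is correct and arrives at essentially the same name as the paper: $\sigma=\{\langle\check n,p_n\rangle\mid n\in\omega\}$ with $p_n$ the one-point condition asserting value $1$ at a fixed coordinate (your shift from $n$ to $n+1$ is harmless but unnecessary). The one substantive difference is in how you obtain the generic filter. You steer below a condition $q_x$ of domain $\omega\setminus\{0\}$ and then diagonalize against an enumeration of the dense classes of $\MM$. The paper instead takes the steering condition to be the \emph{total} characteristic function $\chi_x\colon\omega\to 2$; since total functions are minimal elements of $\PP$, $\chi_x$ is an atom, and the principal filter $G=\{p\in\PP\mid p\subseteq\chi_x\}$ is automatically $\PP$-generic (an atom lies in every dense class). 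This removes your step~(3) entirely and makes step~(4) a one-line computation: $p_n\in G\iff p_n\subseteq\chi_x\iff n\in x$. Your route works, using the countability of $\MM$, but misses this shortcut; dropping the index shift and letting $q_x=\chi_x$ would collapse your argument to the paper's.

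One small gap in your step~(4): from ``$p_n\in G$ iff $p_n$ is compatible with every element of $G$'' together with ``$q_x\parallel p_n\iff q_x(n{+}1)=1$'' you only get the inclusion $\sigma^G\subseteq x$. For $x\subseteq\sigma^G$ you should say explicitly that $q_x(n{+}1)=1$ gives $q_x\leq_\PP p_n$, whence $p_n\in G$ by upward closure.
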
 
\begin{proof} 
Let $\sigma=\{\langle\check{n},\{\langle n,1\rangle\}\rangle\mid n\in\omega\}$. Pick some $x\subseteq\omega$ in $M$. Since $x^*=\{\langle n,1\rangle\mid n\in x\}$ is an atom of $\PP$, $G=\{p\in\PP\mid p\subseteq x^*\}$ is a $\PP$-generic filter over $M$ that satisfies $\sigma^G=x$.
\end{proof} 

\begin{remark}
 If a notion of class forcing satisfies the forcing theorem, it does not necessarily have bounded names: A counterexample is provided by $\PP$ and $\MM$ above. $\PP$ satisfies the forcing theorem over $\MM$ by Lemma \ref{lem:no sets->sdp} and Lemma \ref{lem:ft}, since it does not add new sets. Lemma \ref{example for an unbounded name} shows that $\sigma$ is an unbounded $\PP$-name over $\MM$.
\end{remark}

\section{How to turn a proper class into a set}\label{turning classes into sets}

%$\mathsf{ZFC}^-$ denotes the theory $\mathsf{ZFC}$ without the power set axiom and with the collection scheme. 
%If $M$ is a model of $\mathsf{ZFC}^-$ with a hierarchy, $C$ is a proper class in $M$ \todo{do we need the forcing theorem?}and $G$ is class generic over $M$, then $C$ is not a set in $M[G]$. 

%We fix a well-ordering $\leq^*$ of $V_\omega$ with order type $\omega$ that is definable over $V_\omega$. Let $I_n$ denote the set of the first $n$ elements of $V_\omega$ with respect to $\leq^*$. 
%Suppose that $(x,m)$ and $(y,n)$ are pairs with $m,n\leq\omega$, $x$ is a subset of $I_m$ and $y$ is a subset of $I_n$. We say that $(x,m)$ is an \emph{initial segment} of $(y,n)$ if and only if $m\leq n$ and for all $z\in I_m$, $z\in x\Leftrightarrow z\in y$. 

We will show that over a model $M$ of $\ZF^-$ which thinks that all sets are countable, one can perform a fairly simple class forcing that turns the reals of $M$ (which are a proper class of $M$ by the proof of Cantor's diagonalization argument performed within $M$) into a set in its generic extensions.

 Let $\QQ$ be the finite support product of $\omega$-many copies of the notion of class forcing $\PP$ from Definition \ref{IntroP}.
We claim that forcing with $\QQ$ turns the reals of $M$ into a set in any of its generic extensions. Moreover, we will show that $\QQ$ does not satisfy the forcing theorem. %Using the ideas of this section one could provide a similar forcing notion and argument of proof over any model $M$ that fails to satisfy the power set axiom -- under our above assumptions on $M$, the power set of $\omega$ does not exist in $M$.
This provides easier alternative witnesses for \cite[Theorem 1.3]{ClassForcing}, that is notions of class forcing which fail to satisfy the forcing theorem, however only over certain models of $\ZF^-$. %, in the cases when the power set axiom fails in the ground model (\cite[Theorem 1.3]{ClassForcing} applies also when the ground model does satisfy the power set axiom). \todo[inline]{The above would need checking, maybe we should just omit claiming it.}%Let us first quickly observe that $\PP$, and therefore also $\QQ$, does not have bounded names.

\begin{lemma} \label{properties of the forcing Q}
  There is a $\QQ$-name $\tau$ such that for every $\QQ$-generic filter $G$ over $M$, \[\tau^G=\mathcal P(\omega)^M.\] 
\end{lemma}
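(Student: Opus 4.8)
The plan is to write down a single name $\tau$, assembled coordinate by coordinate, and then for an \emph{arbitrary} $\QQ$-generic $G$ prove the equality $\tau^G=\mathcal P(\omega)^M$ by two inclusions, the content lying entirely in the reverse one. First I would fix the following notation: for $k,n\in\omega$ let $p_{k,n}\in\QQ$ be the condition whose $k$-th coordinate is the partial function $\{\langle n,1\rangle\}$ and which is trivial on every other coordinate. Then I set
$$\tau_k=\{\langle\check n,p_{k,n}\rangle\mid n\in\omega\}\qquad\text{and}\qquad\tau=\{\langle\tau_k,\one_\QQ\rangle\mid k\in\omega\}.$$
Each $\tau_k$ is a countable $\QQ$-name and hence lies in $M$, so $\tau\in M$ as well. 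The idea is that $\tau_k$ names the real read off the $k$-th coordinate of the generic, and $\tau$ collects all of them; in particular $\tau^G=\{\tau_k^G\mid k\in\omega\}$ for every generic $G$.

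Next I would fix an arbitrary $\QQ$-generic filter $G$ over $M$ and analyze the coordinates. For each $k$, let $G_k\subseteq\PP$ be the $k$-th projection of $G$, i.e.\ the set of those $r\in\PP$ for which the element of $\QQ$ that equals $r$ at coordinate $k$ and is trivial elsewhere belongs to $G$. A routine check shows $G_k$ is a filter, and it is $\PP$-generic over $M$: for any dense class $D\in\C$ of $\PP$, the class $\{p\in\QQ\mid p(k)\in D\}$ is definable from $D$ and $k$ (hence in $\C$ by first-order class comprehension) and is dense in $\QQ$, so $G$ meets it. Since the total functions $\omega\to 2$ form a dense class of $\PP$ lying in $\C$, and each such function is an atom of $\PP$ (it has no proper strengthening), $G_k$ is generated by a single total function $f_k$, namely $G_k=\{q\in\PP\mid q\subseteq f_k\}$, exactly as in the proof of Lemma \ref{example for an unbounded name}. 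As $f_k$ is a condition of $\PP$ it lies in $M$, so $f_k\in\mathcal P(\omega)^M$. Unwinding the definitions, $p_{k,n}\in G$ iff $\{\langle n,1\rangle\}\in G_k$ iff $f_k(n)=1$, whence $\tau_k^G=f_k$. This already gives $\tau^G\subseteq\mathcal P(\omega)^M$.

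For the reverse inclusion, which is where genericity does the real work, I would fix $x\in\mathcal P(\omega)^M$ and let $x^*=\{\langle n,1\rangle\mid n\in x\}$ be the corresponding total function. The class
$$D_x=\{p\in\QQ\mid\exists k\in\omega\ (p(k)=x^*)\}$$
is definable over $\langle M,\in\rangle$ from the parameter $x$, hence lies in $\C$, and it is dense: given $q\in\QQ$ with finite support $s$, choose any $k\notin s$ and extend $q$ to $p$ by setting its $k$-th coordinate to $x^*$. By genericity $G$ meets $D_x$, so there is $k$ with $x^*\in G_k$, forcing $f_k=x^*$ (two total functions, one below the other), i.e.\ $x=\tau_k^G\in\tau^G$. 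Since $x$ was arbitrary, $\mathcal P(\omega)^M\subseteq\tau^G$, and combining the inclusions yields $\tau^G=\mathcal P(\omega)^M$ for every $\QQ$-generic $G$.

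The main obstacle, and precisely the reason the conclusion holds for every generic rather than for some cleverly constructed one, is the reverse inclusion: it demands that $G$ meet the entire proper-class-indexed family $\langle D_x\mid x\in\mathcal P(\omega)^M\rangle$ of dense classes. No bookkeeping construction of a particular $G$ is needed, because class-genericity by definition requires meeting every dense subclass in $\C$, and each $D_x$ is such a class. The only other points requiring care are the verification that each projection $G_k$ is genuinely $\PP$-generic and is generated by a total function of $M$, which rest on the density of the total functions in $\PP$ together with their being atoms, and the bookkeeping identity $\tau_k^G=f_k$, which follows from the explicit description $G_k=\{q\mid q\subseteq f_k\}$.
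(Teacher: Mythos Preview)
Your argument is essentially the paper's: you define the same name $\tau$ (your $p_{k,n}$ is the paper's $\{\langle\langle k,n\rangle,1\rangle\}$) and use the same dense classes $D_x$ for the inclusion $\mathcal P(\omega)^M\subseteq\tau^G$. You are in fact more careful than the paper, which does not spell out the forward inclusion $\tau^G\subseteq\mathcal P(\omega)^M$; your verification that each projection $G_k$ is $\PP$-generic and hence generated by an atom $f_k\in M$ is exactly the right way to fill that in.

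There is, however, one genuine slip in the reverse inclusion. You write $x^*=\{\langle n,1\rangle\mid n\in x\}$ and call it ``the corresponding total function'', but this formula gives a \emph{partial} function with domain $x$. With this $x^*$ your final step breaks: from $x^*\in G_k=\{q\mid q\subseteq f_k\}$ you only get $x^*\subseteq f_k$, i.e.\ $x\subseteq\{n\mid f_k(n)=1\}$, not equality, so you have not shown $x\in\tau^G$. What you want is the characteristic function $x^*=\{\langle n,1\rangle\mid n\in x\}\cup\{\langle n,0\rangle\mid n\notin x\}$; then $x^*$ really is total, $D_x$ is still dense (same finite-support argument), and your ``two total functions, one below the other'' step correctly yields $f_k=x^*$ and hence $\tau_k^G=x$.
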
 
\begin{proof} 
For $i<\omega$, let $\tau_i=\{\langle\check{n},\{\langle\langle i,n\rangle,1\rangle\}\rangle\mid n\in\omega\}$, that is $\tau_i$ is the canonical $\QQ$-name for the real chosen in its $i^\textrm{th}$ iterand. Note that each $\tau_i$ is an unbounded name. Let $\tau=\{\langle\tau_i,1\rangle\mid i\in\omega\}$. For every $x\colon\omega\to 2$ in $M$, the set 
$$D_x=\{\langle p_0,\dots,p_k\rangle\in \QQ\mid k\in\omega\,\land\,\exists i\leq k\ p_i=x\}$$
is dense in $\QQ$. Hence $\tau^G=\mathcal P(\omega)^M$ for every $\QQ$-generic filter $G$ over $M$. 
\end{proof} 

$\Col(\omega,\mathcal P(\omega))^M$ is clearly isomorphic to a dense subforcing of $\QQ$. However we will close this section by showing that (unlike $\Col(\omega,\mathcal P(\omega))^M$, by Corollary \ref{approachableft}), $\QQ$ does not satisfy the forcing theorem (of course, $\Col(\omega,\mathcal P(\omega))^M$, being approachable by projections, also fails to turn a proper class of $M$ into a set, by Lemma \ref{approachablebn} and Lemma \ref{boundednamescs}).

\begin{theorem}\label{thm:failure ft}
  $\QQ$ does not satisfy the forcing theorem over $\MM$.
\end{theorem}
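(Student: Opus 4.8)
The plan is to show that the definability lemma fails for the atomic formula $\anf{v_0=v_1}$, which by \cite[Theorem 4.3]{ClassForcing} is equivalent to the failure of the forcing theorem. More precisely, I will exhibit $\QQ$-names $\sigma,\tau$ and show that the class $\{\langle p,\sigma,\tau\rangle\mid p\Vdash_\QQ\sigma=\tau\}$ is not in $\C$. The natural choice is to take $\tau$ as in Lemma \ref{properties of the forcing Q}, the name with $\tau^G=\mathcal P(\omega)^M$ for every generic $G$, so that $\one_\QQ\Vdash_\QQ\tau=\check{\mathcal P(\omega)^M}$ morally holds — except that $\mathcal P(\omega)^M$ is a proper class of $M$ and has no check-name. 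For $\sigma$ I would take a name of the form $\{\langle\tau_i,q\rangle\mid i\in\omega,\ q\in A_i\}$ where the $A_i\subseteq\QQ$ are chosen so that deciding whether $\tau_i^G$ is thrown into $\sigma^G$ encodes enough information to recover a proper class worth of data — the key being that each $\tau_i$ is an \emph{unbounded} name (as noted in the proof of Lemma \ref{properties of the forcing Q}).

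The heart of the argument is a reflection/counting obstruction: if the forcing relation $p\Vdash_\QQ\sigma=\tau$ were definable by a class, then, roughly, one could read off from a set of conditions a ground-model-definable way to "cover" all the values that $\tau$ (equivalently, all the reals of $M$) can take — but that cover would have to be a set, contradicting that $\mathcal P(\omega)^M$ is a proper class of $M$. Concretely, I would argue as follows. Suppose toward a contradiction the forcing theorem holds for $\QQ$ over $\MM$. First, using the truth lemma applied to $\tau$, observe that for each real $x\in\mathcal P(\omega)^M$ there is a condition forcing $\check x\in\tau$ (indeed, any condition in the dense set $D_x$ of Lemma \ref{properties of the forcing Q}). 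Then, using the definability lemma for equality together with the finite-support product structure, I would extract from the (putative) class forcing relation a class function in $\C$ that, on input (a canonical name for) a real $x$, returns the least-rank witness that $\check x$ is "realized" in $\tau$; the finite-support nature of conditions forces this witness to live in finitely many coordinates, and the values across coordinates would then have to be bounded by replacement in $\MM$ — but each $\tau_i$ is unbounded, so no such bound exists, a contradiction. I expect the cleanest packaging is: the forcing theorem for $\QQ$ would imply (via the argument of Lemma \ref{approachablebn}, Lemma \ref{boundednamescs}, or directly) that $\QQ$ has bounded names, hence does not turn $\mathcal P(\omega)^M$ into a set — directly contradicting Lemma \ref{properties of the forcing Q}.

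So the skeleton is: (1) assume the forcing theorem for $\QQ$; (2) deduce, by the same reasoning used in Lemma \ref{approachablebn} or Lemma \ref{srp2}'s consequences — namely that the forcing theorem lets one decide each possible value of a name by a single condition and then cover all values by a set via replacement in $\MM$ — that $\QQ$ has bounded names, or at least that $\QQ$ does not turn any proper class of $\MM$ into a set; (3) invoke Lemma \ref{properties of the forcing Q}, which produces a $\QQ$-name $\tau$ with $\tau^G=\mathcal P(\omega)^M$ for \emph{every} generic $G$, and recall that $\mathcal P(\omega)^M$ is a proper class of $M$ since $M\models\ZF^-$ thinks all its sets are countable; (4) conclude the contradiction. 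The main obstacle is step (2): one must be careful that the argument "forcing theorem $\Rightarrow$ each possible value is forced by a condition $\Rightarrow$ set-sized cover by replacement" actually applies to $\QQ$, i.e.\ that the truth lemma for atomic formulas genuinely yields, for each real $x$ realized by $\tau$ in some generic extension, a single condition $p\in\QQ$ with $p\Vdash_\QQ\check x\in\tau$ (not merely a dense set of them), and then that replacement in $\MM$ applied to the class function $x\mapsto p$ — which is definable from the forcing relation, hence a class of $\MM$ — produces the contradiction-yielding set. If that step proves delicate, the fallback is to argue the failure of the definability lemma directly, diagonalizing against a putative defining class exactly as in the proofs of Lemma \ref{lem:no sets->sdp} and Lemma \ref{srp2}, building a generic $G$ along an enumeration (from $\V$) of the dense subclasses of $\QQ$ and the candidate defining classes, so that $\tau^G\ne\mathcal P(\omega)^M$ unless the candidate is wrong — though I expect the bounded-names route to be shorter.
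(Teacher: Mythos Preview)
Your step (2) has a genuine gap that cannot be repaired along the lines you sketch. The implication ``forcing theorem $\Rightarrow$ bounded names'' is \emph{false}: the paper itself exhibits a counterexample in the remark immediately following Lemma~\ref{example for an unbounded name}, namely the forcing $\PP$ of Definition~\ref{IntroP}, which satisfies the forcing theorem (via the set decision property) yet has an unbounded name. The argument you borrow from Lemma~\ref{approachablebn} does not transfer: there, approachability by projections yields a projection $\pi$ onto a \emph{set-sized} subforcing, so that each possible value of a name is witnessed by a condition in a fixed set, and replacement then bounds the values. With only the forcing theorem for the class-sized $\QQ$, the witnessing conditions $p_x$ (one for each real $x\in\mathcal P(\omega)^M$) range over a proper class, and your appeal to ``replacement in $\MM$ applied to the class function $x\mapsto p$'' fails because the domain of that function is itself a proper class. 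The weaker implication ``forcing theorem $\Rightarrow$ does not turn proper classes into sets'' is precisely the open Question~8.1 at the end of the paper, so your main route amounts to assuming an answer to that question.

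The paper's proof proceeds along entirely different lines. Assuming the forcing theorem for $\QQ$, it translates first-order formulas over the second-order arithmetic structure $\mathcal S=\langle\mathcal P(\omega)^M,\omega,\in,=,<,+,\cdot\rangle$ into infinitary quantifier-free formulas in the forcing language $\L_{\On,0}^{\Vdash}(\QQ,M)$, using the names $\tau_i$ as stand-ins for real-variable quantification (since every real of $M$ is some $\tau_i^G$). The uniform forcing theorem for infinitary formulas (from \cite[Lemma~5.2]{ClassForcing}) then makes truth in $\mathcal S$ definable over $\MM$, and since every set of $M$ is countable this yields a first-order truth predicate for $M$ itself, contradicting Tarski. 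Your fallback diagonalization idea is too vague to assess, but note that the methods of Lemmata~\ref{lem:no sets->sdp} and~\ref{srp2} diagonalize against set-names in $M$, not against a single putative class in $\C$; it is not clear what object you would be diagonalizing against here.
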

\begin{proof}
  Assume for a contradiction that $\QQ$ does satisfy the forcing theorem over $\MM$. We show that we can use this assumption to define a first order truth predicate over $\MM$ which will clearly be a contradiction. We will start by using this assumption to define a truth predicate for the two-sorted structure $\mathcal S=\langle\mathcal P(\omega)^M,\omega,\in,=,<,+,\cdot\rangle$ of second order arithmetic.

We will translate first order formulas over $\mathcal S$ into infinitary quantifier-free formulae in the forcing language of $\QQ$ so that truth over $\mathcal S$ of instances of the former corresponds to forcing respective instances of the latter. The infinitary language $\L_{\On,0}^{\Vdash}(\QQ,M)$ is built up from the atomic formulae $\check q\in\dot G$, $\sigma\in\tau$ and $\sigma=\tau$ for $q\in\QQ$ and $\sigma,\tau\in M^\PP$, the negation operator and set-sized conjunctions and disjunctions. This language originates from \cite[Section 5]{ClassForcing}, where also a more detailed description of this language may be found.

Now inductively, we assign to every first order formula $\varphi$ over $\mathcal S$ with free variables for natural numbers in $\{u_0,\dots,u_{k-1}\}$, free variables for reals in $\{v_0,\dots,v_{l-1}\}$ and all $\vec n=n_0,\dots,n_{k-1}\in\omega^k$ and $\vec\alpha=\alpha_0,\dots,\alpha_{l-1}\in\omega^l$ an $\L_{\On,0}^{\Vdash}(\QQ,M)$-formula in the following way. If $t$ is any $\mathcal S$-term, let $t(u_0,\dots,u_{k-1})_{\vec\alpha,\vec n}^*=t(\check{n_0},\dots,\check{n_{k-1}})$. For the sake of simplicity, from now on we only consider formulas that only involve trivial terms (and nontrivial terms would need to be handled as above).
\begin{align*}
 (u_i<u_j)_{\vec\alpha,\vec n}^*&=(\check{n_i}<\check{n_j})\\
 (u_i\in v_j)_{\vec\alpha,\vec n}^*&=(\check{n_i}\in\sigma_{\alpha_j})\\
 (\neg\varphi)_{\vec\alpha,\vec n}^*&=(\neg\varphi_{\vec\alpha,\vec n}^*)\\
 (\varphi\vee\psi)_{\vec\alpha,\vec n}^*&=(\varphi_{\vec\alpha,\vec n}^*\vee\psi_{\vec\alpha,\vec n}^*)\\
 (\exists v_k\varphi)_{\vec\alpha,\vec n}^*&=(\bigvee_{i<\omega}\varphi_{\vec\alpha^\frown i,\vec n}^*).
\end{align*}

Note that by \cite[Lemma 5.2]{ClassForcing},
if $\QQ$ satisfies the definability lemma for $\anf{v_0\in v_1}$ or $\anf{v_0=v_1}$, then it satisfies 
the uniform forcing theorem for all infinitary formulae in the forcing language of $\QQ$. %Therefore, it suffices to show that  the forcing theorem fails for some infinitary formula of the form $\varphi_{\vec\alpha}^*$. Suppose the contrary.
The following claim will thus allow us to define a truth predicate for first-order formulas over $\mathcal S$. %, contradicting Tarski's theorem on the 
%undefinability of truth

\begin{claim}\label{claim:truth}
 For every first-order formula $\varphi$ over $\mathcal S$ with free variables for natural numbers among $\{u_0,\dots,u_{k-1}\}$ and free variables for reals among $\{v_0,\dots,v_{l-1}\}$ and for all $\vec n=n_0,\dots,n_{k-1}\in\omega^k$ and all sequences of reals $\vec r=r_0,\dots,r_{l-1}$ in $M$,
 the following statements are equivalent:
 \begin{enumerate}
  \item $\mathcal S\models\varphi(\vec r,\vec n)$.
  \item $\forall\vec\alpha\in\omega^l\,\forall q\in\QQ\ q\Vdash_\QQ\anf{\forall i<l\ \sigma_{\alpha_i}=\check{r_i}}\ra q\Vdash_\QQ\varphi_{\vec\alpha,\vec n}^*$.
  \item $\exists\vec\alpha\in\omega^l\,\exists q\in\QQ\ q\Vdash_\QQ\anf{\forall i<l\ \sigma_{\alpha_i}=\check{r_i}}\wedge
 q\Vdash_\QQ\varphi_{\vec\alpha,\vec n}^*$.
 \end{enumerate}
\end{claim}

\begin{proof}
 Observe that since for any $\QQ$-generic filter $G$ over $\MM$, $\{\sigma_i^G\mid i<\omega\}=\mathcal P(\omega)^M$, (2) always implies (3). We will show the equivalence of (1), (2) and (3) by induction on formula complexity. For formulas of the form $\anf{u_i<u_j}$ this is obvious. Consider formulas of the form $\anf{u_i\in v_j}$. Suppose first that $n_i\in r_j$. Let $\alpha<\omega$ and $q\in\QQ$ with $q\Vdash_\PP\sigma_\alpha=\check{r_j}$.
Take a $\QQ$-generic filter with $q\in G$. But then obviously $q\Vdash_\PP\check{n_i}\in\sigma_\alpha$, i.e.\ (2) holds.

Assume now that (3) holds, i.e. there is $\alpha<\omega$ and $q\in\QQ$ such that $q\Vdash_\QQ\sigma_\alpha=\check{r_j}$ and $q\Vdash_\QQ\check{n_i}\in\sigma_\alpha$. Then $q\Vdash_\QQ\check{n_i}\in\check{r_j}$, i.e.\ $n_i\in r_j$ in $M[G]$ whenever $G$ is $\QQ$-generic over $\MM$, and by absoluteness, this statement holds true in $\mathcal S$, i.e.\ (1) holds.

The cases of negations and disjunctions are treated in a fairly standard way, exactly as in the proof of \cite[Theorem 2.6, Claim 2]{pretameness}.

%Next we turn to negations. Suppose first that $M\models\neg\varphi(\vec x)$ and let $\vec\alpha\in\kappa^k$ and $q\leq_\PP p$ with $q\Vdash_\PP\forall i<k\,(\dot F(\check\alpha_i)=\check x_i)$.
%Assume, towards a contradiction, that $q\nVdash_\QQ\neg\varphi_{\vec\alpha}^*$. Then there is $r\leq_\QQ q$
%with $r\Vdash_\QQ\varphi_{\vec\alpha}^*$. By density, we may assume that $r\in\PP$.
%Then $r\leq_\PP p$ and so $\vec\alpha$ and $r$ witness (3) for $\varphi$. By our inductive hypothesis we obtain that $M\models\varphi(\vec x)$,
%a contradiction. The implication from (3) to (1) is similar.

%Suppose now that $M\models(\varphi\vee\psi)(\vec x)$. Without loss of generality, assume that $M\models\varphi(\vec x)$.
%Now if $\vec\alpha\in\kappa^k$ and $q\leq_\PP p$ with $q\Vdash_\PP\forall i<k\,(\dot F(\check\alpha_i)=\check x_i)$, by induction
%$q\Vdash_\QQ\varphi_{\vec\alpha}^*$. But then in particular $q\Vdash_\QQ(\varphi\vee\psi)_{\vec\alpha}^*$. 
%In order to see that (3) implies (1), suppose that $\vec\alpha\in\kappa^k$ and $q\leq_\PP p$ witness (3). Then there must be a strengthenig $r\in\QQ$ of $q$ which satisfies, without loss of generality, $r\Vdash_\QQ\varphi_{\vec\alpha}^*$.
%By density of $\PP$ in $\QQ$, we can assume that $r\in\PP$. This means that $\vec\alpha$ and $r$ witness that (3)
%holds for $\varphi$, so $M\models\varphi(\vec x)$. 

We are thus left with the case of existential quantification. Assume first that $\mathcal S\models\exists v_l\varphi(\vec r^\frown v_l,\vec n)$. Pick $y\in\mathcal P(\omega)^M$ such that $\mathcal S\models\varphi(\vec x^\frown y,\vec n)$ and let $\vec\alpha\in\omega^l$ and $q\in\QQ$ with $q\Vdash_\PP\forall i<l\ \sigma_{\alpha_i}=\check{r_i}$. Let $G$ be $\QQ$-generic with $q\in G$. Then there is $t\leq_\QQ q$ and $\beta<\omega$ with $t\in G$ and $t\Vdash_\PP\sigma_\beta=\check y$. By induction, $r\Vdash_\QQ\varphi_{\vec\alpha^\frown\beta,\vec n}^*$.
In particular, $M[G]\models(\exists v_k\varphi)_{\vec\alpha,\vec n}^*$. The converse follows in a similar way. 
\end{proof}

%Let $\Fm_1$ denote the set of all G\"odel codes of $\L_\in$-formulae whose only free variable is $v_0$. 
%As a consequence of Claim \ref{claim:truth}, the class
%$$T=\{\langle\gbr{\varphi},x\rangle\mid\gbr\varphi\in\Fm_1\wedge x\in M\wedge\forall\alpha<\kappa\,\forall q\leq_\PP p\ q\Vdash_\PP\dot F(\check\alpha)=\check x\ra q\Vdash_\QQ\varphi_\alpha^*\}$$
%defines a first-order truth predicate for $M$, contradicting our assumptions on $\MM$. 
Using the above claim together with the assumption of the forcing theorem, we obtain a truth predicate for $\mathcal S$ definably over $M$. But then by the usual translation between $H_{\omega_1}$ and the reals, we obtain from this a truth predicate for $M$ that is definable over $M$, contradicting Tarski's undefinability of truth.
\end{proof}

\cite[Theorem 1.12]{pretameness} gives a list of many desirable properties of notions of class forcing which are -- under additional assumptions on the ground model -- equivalent to pretameness. For example, pretameness can be characterized in terms of the forcing theorem and the existence of a Boolean completion. In order to state these equivalences, we need the following.

\begin{notation}
	Let $\MM\models\GB^-$ and let $\Psi$ be some property of a notion of class forcing $\PP$ for $\MM=\langle M,\C\rangle$. We say that $\PP$ \emph{densely} satisfies $\Psi$ if every notion of class forcing $\QQ$ for $\MM$, for which there
	is a dense embedding in $\C$ from $\PP$ into $\QQ$, satisfies the property $\Psi$.
\end{notation}

\cite[Theorem 1.12]{pretameness} for example states that -- under certain conditions on the ground model -- a forcing notion is pretame iff it densely satisfies the forcing theorem. 
The following result yields yet another charachterization of pretameness of this kind.

\begin{lemma}\label{lemma:non-pretame class->set}
Suppose that $\MM\models\GBC^-$ such that $M$ contains a largest cardinal $\kappa$. If $\PP$ is a notion of class forcing for $\MM$ which is non-pretame but satisfies the forcing theorem, then there is a notion of class forcing $\QQ$ such that $\PP$ is dense in $\QQ$ and forcing with $\QQ$ turns a proper class into a set.
\end{lemma}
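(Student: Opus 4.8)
The plan is to distil from the failure of pretameness a $\PP$-name whose realisations are cofinal in $\On^M$, and then to build $\QQ$ by adjoining to $\PP$ a class of ``virtual'' conditions that let a $\QQ$-generic filter assemble, through a single set-sized name, an enumeration of a proper class of bounded rank. The largest cardinal $\kappa$ enters only in order to have such a proper class at hand: since $\kappa$ is the largest cardinal of $M$, every ordinal $\gamma\in\On^M$ has an $M$-code $c_\gamma$ as a subset of $\kappa$ (put $c_\gamma=\gamma$ for $\gamma<\kappa$ and, for $\gamma\geq\kappa$, let $c_\gamma$ be the $W$-least wellorder of $\kappa$ of order type $\gamma$, where $W$ is a set-like global well-order as provided by $\GBC^-$), so that $X=\{c_\gamma\mid\gamma\in\On^M\}$ is a class of $\MM$ all of whose elements have rank at most $\kappa+1$, and it is proper since $\gamma\mapsto c_\gamma$ is a class bijection between $\On^M$ and $X$.

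First I would fix $p_0\in\PP$ and a sequence $\langle D_i\mid i\in I\rangle\in\C$ of dense subclasses of $\PP$ with $I\in M$ witnessing that $\PP$ is not pretame below $p_0$. Using $W$, let $\dot f$ be the (class) $\PP$-name whose evaluation under a generic filter $G$ with $p_0\in G$ sends $i\in I$ to the $W$-rank of the $W$-least element of $D_i\cap G$. The crucial point is that there is no $q\leq_\PP p_0$ and $\gamma\in\On^M$ with $q\Vdash_\PP\anf{\range(\dot f)\subseteq\gamma}$: otherwise, letting $S_\gamma\in M$ be the $W$-initial segment of length $\gamma$ (a set, by set-likeness of $W$) and $d_i=D_i\cap S_\gamma$, the sequence $\langle d_i\mid i\in I\rangle$ would lie in $M$ (by Collection and Separation with class parameters), $d_i\subseteq D_i$, and $q$ would force each $d_i$ to meet the generic filter, whence each $d_i$ would be predense below $q$ --- contradicting the choice of $p_0$ and $\langle D_i\mid i\in I\rangle$. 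A standard density argument, using the forcing theorem for $\PP$, then yields that $\range(\dot f^G)$ is cofinal in $\On^M$ for every $\PP$-generic filter $G$ over $\MM$ with $p_0\in G$.

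Next I would build $\QQ$. Fix a class function $\delta\mapsto e^\delta$ in $\C$ sending each $\delta\in\On^M$ to the $W$-least surjection of $\kappa$ onto $\{c_\gamma\mid\gamma<\delta\}$, and for $i\in I$ and $\xi<\kappa$ let $\dot e_{i,\xi}$ be a $\PP$-name for the value $e^{\dot f(i)}(\xi)$. For each triple $(i,\xi,\eta)\in I\times\kappa\times\kappa$ for which some $p\leq_\PP p_0$ forces $\anf{\eta\in\dot e_{i,\xi}}$, adjoin to $\PP$ a new condition $r_{i,\xi,\eta}$, placed below $p_0$ and above exactly the $p\in\PP$ with $p\Vdash_\PP\anf{\eta\in\dot e_{i,\xi}}$; since the $\PP$-forcing relation is definable over $\MM$, the resulting $\QQ$ lies in $\C$, and $\PP$ is dense in $\QQ$ because, by construction, every adjoined $r_{i,\xi,\eta}$ lies above some $p\in\PP$. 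As $G\cap\PP$ is $\PP$-generic and $\PP$ is dense in $\QQ$, for any $\QQ$-generic $G$ with $p_0\in G$, writing $\bar G=G\cap\PP$, one has $r_{i,\xi,\eta}\in G$ iff $\eta\in(\dot e_{i,\xi})^{\bar G}$, by the truth lemma for $\PP$ together with the density of $\PP$ in $\QQ$. Setting $\dot x_{i,\xi}=\{\langle\check\eta,r_{i,\xi,\eta}\rangle\mid\eta<\kappa,\ r_{i,\xi,\eta}\text{ was adjoined}\}$ --- a set-sized $\QQ$-name with $\dot x_{i,\xi}^{\,G}=(\dot e_{i,\xi})^{\bar G}=e^{\dot f^{\bar G}(i)}(\xi)\in X$ --- and $\sigma=\{\langle\dot x_{i,\xi},\one_\QQ\rangle\mid i\in I,\ \xi<\kappa\}$, again a set-sized $\QQ$-name (by Collection and Separation in $\MM$), one computes $\sigma^G=\bigcup_{i\in I}\range(e^{\dot f^{\bar G}(i)})=\{c_\gamma\mid\gamma<\sup_{i\in I}\dot f^{\bar G}(i)\}=X$, using that $\range(\dot f^{\bar G})$ is cofinal in $\On^M$. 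Since $\MM$ is countable a $\QQ$-generic $G$ over $\MM$ with $p_0\in G$ exists, and $X$ is a proper class of $\MM$, so $\QQ$ turns a proper class into a set, as required.

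I expect the heart of the argument to be the extraction in the second step: recognising that the generic choices ``$W$-least element of $D_i\cap G$'' admit no uniform bound below any $q\leq_\PP p_0$ is exactly a restatement of the failure of pretameness of $\PP$, and this is where both the forcing theorem for $\PP$ and the set-likeness of the global well-order are indispensable. The remainder --- checking that $\QQ$ is a bona fide notion of class forcing in which $\PP$ is dense, that $\dot x_{i,\xi}$ and $\sigma$ are genuine elements of $M^\QQ$, and the order-theoretic bookkeeping around the adjoined conditions --- should be routine, though it does rely on Replacement and Separation with class parameters in $\MM$ to see that the relevant objects are sets.
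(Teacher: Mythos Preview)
Your proposal is correct and follows essentially the same strategy as the paper: extract from the failure of pretameness a generic function with unbounded range in $\On^M$, then enlarge $\PP$ by adjoining formal suprema of classes of the form $\{p\mid p\Vdash_\PP\varphi\}$ (definable, thanks to the forcing theorem) so that a \emph{set-sized} name can be assembled whose evaluation is a proper class contained in $\mathcal P(\kappa)^M$.

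The differences are in the packaging rather than the idea. The paper quotes an external lemma to obtain directly a name $\dot F$ with $p\Vdash_\PP\anf{\dot F\colon\check\alpha\to\On^M\text{ is surjective}}$, then uses the global well-order to retarget $\dot F$ onto $\mathcal P(\kappa)^M$; it then adjoins one new condition $p_{\beta,\lambda}$ as the supremum of $\{p\mid p\Vdash_\PP\check\lambda\in\dot F(\check\beta)\}$, for $(\beta,\lambda)\in\alpha\times\kappa$, and takes $\sigma=\{\langle\{\langle\check\lambda,p_{\beta,\lambda}\rangle\mid\lambda<\kappa\},\one_\PP\rangle\mid\beta<\alpha\}$. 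You instead build the unbounded function $\dot f$ by hand from the pretameness counterexample, and since your $\dot f$ is only \emph{cofinal} rather than surjective, you interpose the enumeration layer $e^\delta$ before adjoining suprema, yielding an index set $I\times\kappa\times\kappa$ rather than $\alpha\times\kappa$. Your route is more self-contained (no appeal to an external lemma about non-pretame forcing), at the cost of a slightly busier construction; the paper's is shorter but relies on a black box. Two small points worth tightening in a full write-up: the order on $\QQ$ needs the same care the paper gives (a new condition should lie below every $q\in\PP$ that dominates \emph{all} of the relevant class, not merely below $p_0$), and $e^\delta$ should be defined so that its domain is always nonempty (e.g.\ surject onto $\{c_\gamma\mid\gamma\leq\delta\}$).
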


\begin{proof}
Suppose that $\PP$ is a non-pretame notion of class forcing for $\MM$. By \cite[Lemma 2.7]{pretameness}, the class of all $p\in\PP$ such that there is an ordinal $\alpha$ and a class name $\dot F$ with
$p\Vdash_\PP\anf{\dot F:\check{\alpha}\ra\On^M}\text{is surjective}"$ is dense. Using the existence of a global well-order of order-type $\On^M$, we may assume that $\one_\PP\Vdash_\PP\anf{\dot F:\check{\alpha}\ra \mathcal P(\kappa)^M}\text{is surjective}"$. Now we extend $\PP$ to a forcing notion $\QQ$ by formally adding 
the suprema of the classes
\begin{align*}
D_{\beta,\lambda}&=\{p\in\PP\mid p\Vdash_\PP\check\lambda\in\dot F(\check\beta)\}.
\end{align*}
for $\beta<\alpha$ and $x\in \mathcal P(\kappa)$. More precisely, let $\QQ=\PP\cup\{p_{\beta,\lambda}\mid\beta<\alpha,\lambda<\kappa\}$, where each $p_{\beta,\lambda}$ does not lie in $\PP$. We can then order $\QQ$ by
\begin{align*}
 p_{\beta,\lambda}\leq_{\QQ}p&\Llr\forall q\in D_{\beta,\lambda},( q\leq_\PP p),\\
 p\leq_{\QQ} p_{\beta,\lambda}&\Llr D_{\beta,\lambda}\text{ is predense below $p$ in }\PP,\\
 p_{\beta,\lambda}\leq_{\QQ} p_{\beta',\lambda'}&\Llr\forall q\in D_{\beta,\lambda}\,( q\leq_{\QQ} p_{\beta',\lambda'})
\end{align*}
for $p\in\PP$ and $\beta,\beta'<\alpha$ and $\lambda,\lambda'<\kappa$.
By construction, $\PP$ is a dense subforcing of $\QQ$. For $\beta<\alpha$ we define
\begin{align*}
 \sigma_\beta&=\{\langle\check\lambda,p_{\beta,\lambda}\rangle\mid\lambda<\kappa\}\\
 \sigma&=\{\langle\sigma_\beta,\one_\PP\rangle\mid\beta<\alpha\}.
\end{align*}
Now let $G$ be $\QQ$-generic over $\MM$. 

\begin{claim*}
 $\sigma^G=\mathcal P(\kappa)^M$.
\end{claim*}
\begin{proof}
Let $\beta<\alpha$. Then $\lambda\in \sigma_\beta^G$ iff $p_{\beta,\lambda}\in G$ iff $\lambda\in\dot F^G(\beta)$. Hence $\sigma_\beta^G=\dot F^G(\beta)\in\mathcal P(\kappa)^M$. This proves the claim, since $\dot F^G$ is surjective.
\end{proof}
Since $\mathcal P(\kappa)^M$ is a proper class in $\MM$, it follows from the claim above that $\QQ$ turns a proper class into a set.
\end{proof}

Note that a similar argument as the one given in the proof of Theorem \ref{thm:failure ft} shows that the forcing notion $\QQ$ in the proof of Lemma \ref{lemma:non-pretame class->set} does not satisfy the forcing theorem.
Using Lemmata \ref{lemma:pretame pres proper class} and \ref{lemma:non-pretame class->set} we obtain the following characterization of pretameness:

\begin{theorem}\label{thm:char pretame}
Suppose that $\MM\models\GBC^-$ such that $M$ contains a largest cardinal $\kappa$. Then a notion of forcing for $\MM$ which satisfies the forcing theorem is pretame for $\MM$ if and only if it densely does not turn proper classes into sets. \hfill\qedsymbol
\end{theorem}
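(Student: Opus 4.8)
The plan is to deduce Theorem~\ref{thm:char pretame} directly from the two lemmata cited just before it, so the proof will be essentially a bookkeeping argument. Fix $\MM\models\GBC^-$ with a largest cardinal $\kappa$, and let $\PP$ be a notion of class forcing for $\MM$ that satisfies the forcing theorem. We must show that $\PP$ is pretame if and only if $\PP$ densely does not turn proper classes into sets, that is, every $\QQ$ into which $\PP$ densely embeds (via an embedding in $\C$) fails to turn a proper class of $\MM$ into a set.

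For the forward direction, assume $\PP$ is pretame. Pretameness is preserved under dense embeddings (this is a standard fact about pretameness, which could also be extracted from the definition since dense subclasses pull back and push forward along a dense embedding), so any $\QQ$ densely containing $\PP$ is again pretame; Lemma~\ref{lemma:pretame pres proper class} then tells us that such a $\QQ$ does not turn any proper class of $\MM$ into a set. Hence $\PP$ densely does not turn proper classes into sets. I would state the preservation of pretameness under dense embeddings explicitly — either citing \cite{pretameness} where this is presumably recorded, or giving the one-line verification — since this is the only non-bookkeeping ingredient of this direction.

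For the converse, argue by contraposition: suppose $\PP$ satisfies the forcing theorem but is \emph{not} pretame. Then Lemma~\ref{lemma:non-pretame class->set} applies verbatim (its hypotheses are exactly $\MM\models\GBC^-$ with largest cardinal $\kappa$, $\PP$ non-pretame, $\PP$ satisfies the forcing theorem), and yields a notion of class forcing $\QQ$ in which $\PP$ is dense and such that forcing with $\QQ$ turns a proper class of $\MM$ (namely $\mathcal P(\kappa)^M$) into a set. This exhibits a witness showing that $\PP$ does \emph{not} densely fail to turn proper classes into sets. Thus if $\PP$ densely does not turn proper classes into sets, it must be pretame.

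The only real obstacle I anticipate is making sure the dense embedding produced in Lemma~\ref{lemma:non-pretame class->set} is genuinely a member of $\C$ (so that it counts as a witness for the ``densely'' quantifier) and that pretameness transfers along dense embeddings lying in $\C$ in the forward direction — both are routine, but they are the points where one must be careful about the second-order setup rather than just quoting the lemmata. Putting the two implications together gives the biconditional, completing the proof. \qedsymbol
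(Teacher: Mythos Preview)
Your proposal is correct and follows exactly the approach the paper intends: the theorem is stated as an immediate consequence of Lemma~\ref{lemma:pretame pres proper class} and Lemma~\ref{lemma:non-pretame class->set}, and your bookkeeping is precisely the unpacking of that sentence. In fact you are slightly more careful than the paper, which does not explicitly record the preservation of pretameness along dense embeddings that you flag as the one nontrivial ingredient of the forward direction; your suggestion to state or cite this fact is well taken.
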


\section{Open Questions}\label{questions}

In Section \ref{not turning classes into sets}, we show that all known properties of forcing notions which imply the forcing theorem (except, of course, the forcing theorem itself) prevent proper classes from being turned into sets in generic extensions. A natural question is therefore the following:

\begin{question}
 Does the forcing theorem imply that no proper class in the ground model is turned into a set in the generic extension?
\end{question}

By now, we know a range of combinatorial properties that imply the forcing theorem to hold, however we do not know a combinatorial characterization of the forcing theorem itself.

\begin{question}
  Is there a combinatorial property that holds for a notion of class forcing $\PP$ exactly if $\PP$ satisfies the forcing theorem?
\end{question}

%In the proof of Theorem \ref{thm:char pretame} we require the existence of a set-like well-order of the ground model. It would therefore be interesting to know whether this assumption is necessary.

%\begin{question}
%Can pretameness be characterized in terms of the property of not turning proper classes into sets in $\GB^-$?
%\end{question}

\bibliographystyle{alpha}
\bibliography{class-forcing}
  
\end{document}